\tikzstyle{dot}=[shape=circle,fill=black,inner sep=1.5pt]
\tikzstyle{root}=[ultra thick,->]
\tikzstyle{weight}=[orange,ultra thick,->]
\tikzstyle{weightdot}=[dot,orange]
\tikzstyle{wall}=[blue, thin, dashed]
\tikzstyle{auxwall}=[blue!20, thin, dashed]
\tikzstyle{->-}=[postaction={decorate,decoration={markings,mark=at position .5 with {\arrow{to}}}}]
\tikzstyle{-<-}=[postaction={decorate,decoration={markings,mark=at position .5 with {\arrowreversed{to};}}}]
\tikzstyle{--}=[double,double distance=1pt]
\numberwithin{equation}{section}
\theoremstyle{plain}
\newtheorem{introtheorem}{Theorem}
\newtheorem{introcorollary}[introtheorem]{Corollary}
\newtheorem{introquestion}[introtheorem]{Question}
\newtheorem{theorem}{Theorem}[section]
\newtheorem{lemma}[theorem]{Lemma}
\newtheorem{corollary}[theorem]{Corollary}
\newtheorem{proposition}[theorem]{Proposition}
\newtheorem{observation}[theorem]{Observation}
\theoremstyle{definition}
\newtheorem{definition}[theorem]{Definition}
\newtheorem{example}[theorem]{Example}
\theoremstyle{remark}
\newtheorem*{remark}{Remark}
\newcommand{\Z}{\mathbb{Z}}
\newcommand{\N}{\mathbb{N}}
\newcommand{\calA}{\mathcal{A}}
\newcommand{\calU}{\mathcal{U}}
\newcommand{\calS}{\mathcal{S}}
\newcommand{\calE}{\mathcal{E}}
\newcommand{\VR}{\operatorname{VR}}
\newcommand{\Sd}{\operatorname{Sd}}
\newcommand{\defeq}{\mathrel{\mathop{:}}=}
\newcommand{\eqdef}{=\mathrel{\mathop{:}}}
\newcommand{\abs}[1]{\lvert #1 \rvert}
\newcommand{\colim}{\operatorname{colim}}
\newcommand{\Sym}{\operatorname{Sym}}
\newcommand{\id}{\operatorname{id}}
\newcommand{\calK}{\mathcal{K}}
\newcommand{\calCB}{\mathcal{CB}}
\renewcommand{\setminus}{\smallsetminus}
\newcommand{\scx}{C}
\newcommand{\sset}{E}
\renewcommand{\bar}{\overline}
\newcommand{\newcomment}[4]{%
\newcounter{#2counter}
\expandafter\newcommand\csname #1\endcsname[1]{%
\refstepcounter{#2counter}%
{\color{#4}(#3\arabic{#2counter})}\marginpar{\scriptsize\raggedright\textbf{\color{#4}(#2 \arabic{#2counter}):} ##1}%
}}
\definecolor{darkgreen}{rgb}{0,0.6,0}
\begin{document}
\title[Pragmatic finiteness properties of locally compact groups]{Pragmatic finiteness properties of\\locally compact groups}
\date{\today}
\subjclass[2010]{Primary 57M07;
                Secondary 20F65, 22D05%
              }

\keywords{}

\author[D.~Chanfi]{Dorian Chanfi}
\address{Mathematisches Institut, JLU Gießen, Arndtstr.\ 2, D-35392 Gießen, Germany}
\thanks{S.W.\ was supported through the DFG Heisenberg project WI 4079/6.}
\email{stefan.witzel@math.uni-giessen.de}


\author[S.~Witzel]{Stefan Witzel}
\address{Mathematisches Institut, JLU Gießen, Arndtstr.\ 2, D-35392 Gießen, Germany}
\thanks{D.C.\ was supported through the DFG project WI 4079/7 and a fellowship of the Humboldt Foundation.}
\email{dorian.chanfi@math.uni-giessen.de}

\begin{abstract}
  We compare finiteness properties of locally compact groups that generalize the properties of being compactly generated and of being compactly presented. Three such families of properties have been proposed: Abels--Tiemeyer's type $C_n$, coarse $(n-1)$-connectedness, and Castellano--Corob-Cook's type $F_n$. The first was defined for locally compact groups, the second can be defined for general topological groups, while the third was defined only for tdlc groups. We prove that all three families lead to the same notion for locally compact groups. This justifies working with these properties despite the fact that it is still unclear in which sense they describe finiteness properties of free classifying spaces for locally compact groups. Various parts of the arguments are well-known to various experts. By putting them together we hope to clarify the literature.
\end{abstract}

\maketitle

A discrete group $G$ is of type $F_n$ if there it admits a model for $EG$, i.e.\ a contractible CW-complex on which $G$ acts freely by permuting the cells, whose $n$-skeleton is cocompact. A group is of type $F_1$ or of type $F_2$ iff it is finitely generated respectively finitely presented. If $G$ is a locally compact group, it is not so clear how to define what a model for $EG$ is via a universal property that  leads to a notion of finiteness properties. In the absence of such a notion of finiteness properties of locally compact groups, various pragmatic definitions have been proposed that start with an (obviously reasonable) assumption (that compact groups should enjoy all finiteness properties, that a certain space should be a model for $EG$, or both) and base the definition on this.

These properties are: being of type $C_n$ \cite{AbelsTiemeyer97}, being coarsely $(n-1)$-connected (extrapolating \cite{Alonso94}), and, for totally disconnected groups, being of type $F_n$ in the sense of \cite{CastellanoCorobCook20}. Our main result is:

\begin{introtheorem}\label{thm:main}
  Let $G$ be a locally compact group and let $G^0$ be the connected component of the identity. The following conditions are equivalent:
  \begin{enumerate}
    \item $G$ is of type $C_n$ in the sense of Abels--Tiemeyer.\label{item:cn}
    \item $G$ is coarsely $(n-1)$-connected (for the coarse structure $\calK$).\label{item:cc}
    \item $G/G^0$ is of type $F_n$ in the sense of Castellano--Corob-Cook.\label{item:fn}
  \end{enumerate}
\end{introtheorem}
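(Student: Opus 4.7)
The plan is to reduce the theorem to the totally disconnected case and then to establish a cycle of implications there. Since condition \eqref{item:fn} is by construction a property of $G/G^0$ rather than of $G$, the reduction amounts to proving that \eqref{item:cn} holds for $G$ iff it holds for $G/G^0$, and the analogous statement for \eqref{item:cc}. Once this is in place it suffices to handle the case in which $G$ itself is tdlc, where a compact open subgroup is available as a combinatorial anchor.

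For the reduction I would invoke the structure theory (Yamabe, Montgomery--Zippin): $G^0$ is $\sigma$-compact, compactly generated, and of type $C_\infty$ in the sense of Abels--Tiemeyer (connected locally compact groups are compact-by-Lie, and Lie groups admit a compact classifying space). For \eqref{item:cn}, I would then apply the extension theorem of Abels--Tiemeyer, according to which type $C_n$ is inherited in both directions along an extension whose kernel is of type $C_\infty$. For \eqref{item:cc}, the key observation is that $G^0$ is coarsely $\infty$-connected, and that the quotient coarse structure on $G/G^0$ induced from $\calK_G$ coincides with $\calK_{G/G^0}$ because compact sets project to compact sets along $G\to G/G^0$; this should imply that coarse $(n-1)$-connectedness passes between $G$ and $G/G^0$.

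Once $G$ is tdlc, I choose a compact open subgroup $U$ and carry out the cycle $\eqref{item:fn}\Rightarrow\eqref{item:cn}\Rightarrow\eqref{item:cc}\Rightarrow\eqref{item:fn}$. The implication $\eqref{item:fn}\Rightarrow\eqref{item:cn}$ is the most direct: a Castellano--Corob-Cook $F_n$-witness is a contractible $G$-CW complex with compact open stabilizers and cocompact $n$-skeleton, which directly realizes an Abels--Tiemeyer $C_n$-witness. For $\eqref{item:cn}\Rightarrow\eqref{item:cc}$, an Abels--Tiemeyer $C_n$-space $X$ lets me fill spheres in $(G,\calK)$ up to bounded error by equivariantly moving through $X$. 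For $\eqref{item:cc}\Rightarrow\eqref{item:fn}$, I would identify $G$ coarsely with the discrete coset space $G/U$, take a cofinal family of Rips complexes of increasing scale, which by coarse $(n-1)$-connectedness can be chosen to become $(n-1)$-connected in the $n$-skeleton, and then equivariantly promote the $n$-skeleton to a $G$-CW complex whose cells have stabilizers equal to conjugates of $U$. Passing to the mapping telescope (or killing higher homotopy by cell attachment beyond dimension $n$) yields a contractible $G$-CW complex that is an $F_n$-witness.

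The main obstacle is likely the step $\eqref{item:cc}\Rightarrow\eqref{item:fn}$, where a purely coarse filling of spheres in $G/U$ must be upgraded to an equivariant topological $G$-CW complex with compact open stabilizers; this requires careful equivariant cell attachment and a verification that the $n$-skeleton remains cocompact. A second delicate point is the reduction for \eqref{item:cc}, where one needs to make precise in what sense $G^0$ is \emph{coarsely negligible} inside $(G,\calK)$, which rests on $G^0$ being simultaneously compactly generated and coarsely $\infty$-connected. Pieces of these arguments exist in the work of Abels--Tiemeyer, Alonso, and Castellano--Corob-Cook, and the proof largely consists in making the translations between the topological, coarse, and equivariant-CW frameworks consistent.
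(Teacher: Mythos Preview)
Your overall architecture---reduce to the totally disconnected case via $G^0$ being of type $C_\infty$, then establish the equivalences there---matches the paper's, but the execution diverges in two ways, and one of your steps is not as direct as you claim.

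First, the paper proves \eqref{item:cn}$\Leftrightarrow$\eqref{item:cc} directly for \emph{all} locally compact groups, with no reduction to the tdlc case. The argument (Corollary~\ref{cor:cn_coarse_connected}) is a pure filtration comparison: the Abels--Tiemeyer filtration $(|G\cdot EK|)_K$ is the \v{C}ech simplicial-set filtration for $\calK$, the coarse-connectivity filtration is the Vietoris--Rips simplicial-complex filtration, and these are interleaved (Observation~\ref{obs:vr_cech}, Proposition~\ref{prop:equivalent_filtrations}). Your separate reduction of \eqref{item:cc} to $G/G^0$ via ``coarse negligibility of $G^0$'' is therefore unnecessary, and it is also the step you yourself flag as not made precise.

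Second, in the tdlc case the paper does not run a cycle but applies a single common filtration twice. With $U$ compact open and $T=G/U$, the filtration $(|G\cdot EF|)_{F\subseteq T\text{ finite}}$ of $|ET|$ is simultaneously the filtration in Abels--Tiemeyer's Proposition~\ref{prop:at_space} and a filtration of a contractible proper discrete $G$-CW complex by cocompact subcomplexes to which Castellano--Corob-Cook's Brown criterion (Theorem~\ref{thm:cc_brown}) applies; essential $(n-1)$-connectedness of this one filtration is equivalent to both \eqref{item:cn} and \eqref{item:fn}. Your step \eqref{item:fn}$\Rightarrow$\eqref{item:cn} is not as direct as you suggest: a contractible $G$-CW complex with cocompact $n$-skeleton is not of the form $|G\cdot EK|$, so a comparison principle (Proposition~\ref{prop:at_space} or Brown's criterion) is still needed. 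Your step \eqref{item:cc}$\Rightarrow$\eqref{item:fn} could be made to work but amounts to reproving Theorem~\ref{thm:cc_brown}. Finally, the extension result is due to Bux--Hartmann--Quintanilha (Theorem~\ref{thm:extension}) rather than Abels--Tiemeyer, and the fact that connected locally compact groups are of type $C_\infty$ is less immediate than ``Lie groups admit a compact classifying space'': the paper devotes Section~5 (following Tiemeyer's thesis) to a coarse-geometric proof via Vietoris--Rips complexes of $G/C$.
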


We take this theorem as a justification to call a locally compact group $G$ \emph{of type $F_n$} if it satisfies these equivalent properties. As usual we say that $G$ is \emph{of type $F_\infty$} if it is of type $F_n$ for all $n \in \N$. An immediate consequence of the theorem, and really the main part of the proof, is:

\begin{introcorollary}\label{cor:main}
  A locally compact group $G$ is of type $F_n$ if and only if $G/G^0$ is of type $F_n$. In particular, connected groups are of type $F_\infty$.
\end{introcorollary}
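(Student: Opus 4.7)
The plan is to derive the corollary directly from Theorem~\ref{thm:main}, using that ``of type $F_n$'' has just been defined via the equivalent conditions listed there. First I would apply the equivalence \eqref{item:cn}~$\Leftrightarrow$~\eqref{item:fn} to $G$ itself; this yields that $G$ is of type $F_n$ if and only if $G/G^0$ is of type $F_n$ in the sense of Castellano--Corob-Cook.

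Next I would apply Theorem~\ref{thm:main} to the totally disconnected locally compact group $H \defeq G/G^0$. Since $H$ is tdlc, $H^0$ is trivial and so $H/H^0 = H$. Hence the equivalence \eqref{item:cn}~$\Leftrightarrow$~\eqref{item:fn} applied to $H$ says that $H$ is of type $F_n$ in the new unified sense if and only if $H$ is of type $F_n$ in the sense of Castellano--Corob-Cook. Chaining the two equivalences gives the first assertion. For the ``in particular'' clause, if $G = G^0$ then $G/G^0$ is the trivial group, which is manifestly of type $F_\infty$ in each of the three senses; the first assertion then forces $G$ itself to be of type $F_\infty$.

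There is no substantial obstacle at the level of the corollary: all the content has been packaged into Theorem~\ref{thm:main}, and the corollary merely observes that the theorem's asymmetric condition \eqref{item:fn} becomes a symmetric statement once one applies the theorem a second time to the quotient $G/G^0$. The genuine work lies in Theorem~\ref{thm:main}; the authors' own remark that the corollary is ``really the main part of the proof'' suggests that within that theorem the implication ``$G/G^0$ of type $F_n \Rightarrow G$ of type $F_n$'' — i.e.\ the fact that connected locally compact groups contribute nothing to finiteness — is the crux around which the three equivalences are organised.
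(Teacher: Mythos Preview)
Your derivation is correct and matches the paper's intent: the authors present Corollary~\ref{cor:main} as an immediate consequence of Theorem~\ref{thm:main}, and in the proof of Theorem~\ref{thm:main} they insert the parenthetical ``(proving Corollary~\ref{cor:main} for property $C_n$)'' at the step where the extension theorem yields $G$ is $C_n$ iff $G/G^0$ is. Your two-step application of \eqref{item:cn}~$\Leftrightarrow$~\eqref{item:fn} to $G$ and then to $H=G/G^0$ is a clean way to unpack this once the theorem is in hand, and your closing remark correctly identifies where the real content lies.
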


The proof combines methods and results from \cite{AbelsTiemeyer97, CastellanoCorobCook20, BuxHartmannQuintanilha, Tiemeyer94}.
Theorem~\ref{thm:main} for tdlc groups is known to experts. Ingredients to Corollary~\ref{cor:main} are less well-known. We hope that assembling the arguments will help to clarify the situation and push research on finiteness properties of locally compact group by allowing researchers to work with their preferred definition.

The theorem suggests that any reasonable notion of finiteness properties for locally compact groups leads to the same notion. For locally compact $\sigma$-compact groups we think this is true, but for groups that are not $\sigma$-compact it is less clear. The reason is that coarse $(n-1)$-connectedness (explicitly) and type $C_n$ (implicitly) rely on a choice of coarse structure. The structure $\calK$ is the one induced by relatively compact sets, while Rosendal defines a coarse structure $\calCB$ that seems more appropriate for non-locally compact groups (such as Fréchet spaces). So for general topological groups one may want to study the properties of being coarsely $(n-1)$-connected for $\calCB$. One should note, however, that this property does not coincide with usual finiteness properties for uncountable discrete groups: $\Sym(\N)$ as a discrete group is not finitely generated but with respect to $\calCB$ it is bounded and thus $(n-1)$-connected for all $n$. For locally compact $\sigma$-compact groups the coarse structures $\calK$ and $\calCB$ coincide so there is no doubt about which structure to consider. This will be discussed in some detail in Sections~\ref{sec:coarse_connectivity} and~\ref{sec:filtration} below.

While it appears that the equivalent notions in Theorem~\ref{thm:main} of being of type $F_n$ identify the right class of locally compact groups it remains elusive what these conditions actually mean: the actual definition should concern the existence of a universal free $G$-space that is cocompact up to a kind of degree $n$, or possibly the existence of free, cocompact $G$-space that is $n$-universal (in a sense generalizing \cite[§19]{Steenrod99}). In particular, it should entail finiteness of continuous cohomology. While we have examples of what ought to be a universal $G$-space, such as \cite{Milnor56} and the one used in defining $C_n$, we seem to be lacking a categorical characterization of a universal free $G$-space that consolidates the topology of $G$ with a notion of $n$-skeleton. (For non-free actions of topological groups this is often less intricate, see for example \cite{Lueck05}). For this reason we formulate:

\begin{introquestion}
  What is the right notion of a universal $G$-space and what kind of finiteness does type $F_n$ characterize?
\end{introquestion}

\subsection*{Acknowledgments} The authors thank Kai-Uwe Bux, Ilaria Castellano, Elisa Hartmann, Tobias Hartnick, José Quintanilha, and Roman Sauer for helpful discussions. The motivation to assemble the arguments here comes from an Oberwolfach Mini Workshop \cite{WitzelOWR} and the second author would like to thank the participants of this workshop as well as the staff of the MFO.

\section{Compactness properties following Abels--Tiemeyer}

In this section we recall Abels and Tiemeyer's properties $C_n$, see \cite{AbelsTiemeyer97}. We first recall some general notions. Let $(H_i)_{i \in I}$ be a directed system of groups. We say that the system is \emph{essentially trivial} if for every $i \in I$ there is a $j \in I$, $j \ge i$ such that $H_i \to H_j$ is trivial. An essentially trivial system satisfies $\colim H_i = \{1\}$ but the converse is not true: in general the morphism $H \to H'$ under which an element $h \in H$ is mapped trivially may depend on $h$. Let $(X_i)_{i \in I}$ be a directed system of topological spaces. We say that the system is \emph{essentially $n$-connected} if for every $i \in I$ there is a $j \in I$, $j \ge i$ such that for all $k \le n$ the map $\pi_k(X_i \to X_j)$ is trivial (essential $0$-connectedness guarantees that the basepoint does not matter).

Let $G$ be a locally compact group. For a set $T$ let $ET$ denote the free simplicial set on $T$: its set of $k$-simplices $T[k] = T^{k+1}$ consists of tuples and there are the obvious face and degeneracy maps. If $K \subseteq G$ is a subset then $EK$ is a sub-simplicial set of $EG$, as are the translates $g \cdot EK$ for $g \in G$, and their union $G\cdot EK = \bigcup_{g \in G} g \cdot EK$. Abels and Tiemeyer think of $EG$ as a free classifying space and of $G \cdot EK$ with $K$ compact as a cocompact subset. With this in mind they define $G$ to be \emph{of type $C_n$} if the directed system $(\abs{G \cdot EK})_{K}$ is essentially $(n-1)$-connected, where $K$ ranges over (relatively) compact subsets.

In general it is not clear how to replace $\abs{EG}$ by a different space (such as a complex on which $G$ acts), but Abels and Tiemeyer do prove that it may be replaced by $\abs{ET}$ for a proper $G$-space $T$ \cite[Theorem~3.2.2]{AbelsTiemeyer97}:

\begin{proposition}\label{prop:at_space}
  Let $G$ act properly on a locally compact space $T$. Let $ET$ be the free simplicial set on $T$ and consider the filtration $(|G \cdot EK|)_{K \subseteq T \text{ compact}}$ of its geometric realization $|ET|$. Then $G$ is of type $C_n$ if and only if $(\pi_i( (|G \cdot EK|))_K$ is essentially trivial for $i < n$.
\end{proposition}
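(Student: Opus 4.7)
The case $T = G$ (acting by left multiplication) is exactly the definition of $C_n$, so the content of the proposition is the independence of the given essential connectivity condition from the choice of proper locally compact $G$-space $T$. My plan is to compare an arbitrary such $T$ with $G$ through the intermediate diagonal $G$-space $G \times T$, exploiting that $E(G \times T) = EG \times ET$ as simplicial sets.

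On $|E(G \times T)|$ there is a two-parameter filtration by $|G \cdot E(K_G \times K_T)|$ for compact $K_G \subseteq G$ and $K_T \subseteq T$, and the equivariant projections
\[ EG \xleftarrow{\; p_G \;} E(G \times T) \xrightarrow{\; p_T \;} ET \]
send $G \cdot E(K_G \times K_T)$ into $G \cdot EK_G$ and $G \cdot EK_T$ respectively. The goal is to show that both projections induce essentially $(n-1)$-connected maps of filtered systems, so that the essential connectivities of all three agree; this delivers both directions of the equivalence in the proposition.

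For $p_G$, fix a basepoint $t_* \in T$ and consider the $G$-equivariant section $\sigma_G \colon g \mapsto (g, g t_*)$. Its $E$-construction sends $G \cdot EK_G$ into $G \cdot E(K_G \times K_G t_*)$ and satisfies $p_G \circ \sigma_G = \id$, exhibiting $|G \cdot EK_G|$ as a retract of $|G \cdot E(K_G \times K_G t_*)|$. For the converse implication, the fibers of $p_G$ over a simplex $(g_0, \ldots, g_k) \in G \cdot EK_G$ are, for a fixed witness $h$, of the form $(hK_T)^{k+1}$, which is contractible; a $G$-equivariant partition-of-unity argument then upgrades the retraction to the needed essential equivalence of filtrations.

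The main obstacle is $p_T$, for which no global equivariant section $T \to G$ exists when $T$ has multiple $G$-orbits. Instead one argues locally: properness implies that only a compactly parametrised family of translates $h'(K_G \times K_T)$ can contribute to the fiber of $p_T$ over a given simplex of $G \cdot EK_T$. Applying a Nerve Lemma to the $G$-translate cover $\{|g \cdot E(K_G \times K_T)|\}_{g \in G}$ of $|G \cdot E(K_G \times K_T)|$, and comparing with the analogous cover of $|G \cdot EK_T|$, allows one to transfer essential triviality of the $\pi_i$ between the two systems. Making this Mayer--Vietoris patching of contractible fibers compatible with the filtration — so that the choice of enlarging compact sets on both sides can be done coherently — is the technical heart of the argument, and is precisely the content of \cite[Theorem~3.2.2]{AbelsTiemeyer97}, whose proof I would follow.
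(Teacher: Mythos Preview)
The paper does not give its own proof of this proposition: it is stated as a citation of \cite[Theorem~3.2.2]{AbelsTiemeyer97} and used as a black box. Your proposal, after sketching a strategy, also explicitly defers to \cite[Theorem~3.2.2]{AbelsTiemeyer97} for the technical core, so in the end you and the paper are doing the same thing --- citing Abels--Tiemeyer.

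That said, a couple of points in your sketch are not quite right and would need repair if you actually wanted to carry out the argument. First, the fiber of $p_G$ over a simplex $(g_0,\ldots,g_k)\in G\cdot EK_G$ inside $G\cdot E(K_G\times K_T)$ is not simply $(hK_T)^{k+1}$ for a single witness $h$: the admissible witnesses form the set $\bigcap_i g_iK_G^{-1}$, and the fiber is a union over all of these, which is not obviously contractible. Second, the phrases ``partition-of-unity argument'' and ``Nerve Lemma to the $G$-translate cover'' are placeholders rather than arguments; in particular, the cover $\{|g\cdot E(K_G\times K_T)|\}_{g\in G}$ is typically far from being a good cover, so a naive nerve comparison does not apply. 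The actual Abels--Tiemeyer proof proceeds via a direct comparison of the filtered systems using continuous equivariant maps $T\to G$ (or more precisely, maps between the spaces playing the role of $T$) constructed from properness and local compactness, rather than via a product space and fiberwise contractibility. Since you ultimately invoke their proof anyway, these issues in the sketch are moot for the purposes of the present paper.
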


\section{Finiteness properties for tdlc groups following Castellano--Corob-Cook}

In their article \cite{CastellanoCorobCook20} Castellano and Corob Cook introduce finiteness properties for totally disconnected locally compact groups making use of the fact that these can act properly (though not freely) on CW complexes. Let $G$ be a totally disconnected locally compact group. A topological space $X$ is called a \emph{discrete $G$-CW complex} if there exists a filtration $X = \bigcup_{n \ge 0} X^{(n)}$ of $X$ by $G$-invariant closed subspaces such that
\begin{enumerate}
    \item $G$ acts on $X^{(0)}$ with open stabilizers,
    \item for each $n \ge 1$, there exists a $G$-space $\Lambda_n$ on which $G$ acts with open stabilizers and $G$-equivariant maps $f_n: S^{n-1} \times \Lambda_n \to X^{(n-1)}$ and $\hat f_n: B^n \times \Lambda_n \to X^{(n)}$ such that the diagram
    \begin{center}
        \begin{tikzcd}
            S^{n-1} \times \Lambda_n \arrow[d] \arrow[r, "f_n"] & X^{(n-1)} \arrow[d] \\
            B^n \times \Lambda_n \arrow[r, "\hat f_n"] & X^{(n)} 
        \end{tikzcd},
    \end{center}
    where $S^{n-1}$ (resp. $B^n$) is the Euclidean unit $(n-1)$-sphere (resp. $n$-ball) endowed with the trivial $G$-action, is cocartesian,
    \item the topology on $X$ is the direct limit of the topologies on the $X^{(n)}$.
\end{enumerate}
The complex $X$ is said to be \emph{proper discrete} if the $X^{(n)}$ and $\Lambda_n$ may be so chosen that the aforementioned actions are with compact open stabilizers.
A totally disconnected locally compact group $G$ is then said to be \emph{of type $F_n$} if there exists a contractible proper discrete $G$-CW complex $X$ such that $G$ acts cocompactly on $X^{(n)}$.

The authors prove Brown's criterion for their properties, the following is a combination of Proposition~3.13, Theorem~4.7, and Theorem~4.10 of \cite{CastellanoCorobCook20}:

\begin{theorem}\label{thm:cc_brown}
  Let $G$ be totally disconnected locally compact group and let $X$ be an $(n-1)$-connected discrete $G$-CW complex. Assume that the stabilizer of a $k$-cell is of type $F_{n-k}$. Let $(X_k)_k$ be a filtration by $G$-invariant subcomplexes such that $X_k^{(n)}$ is $G$-cocompact for each $k$. Then $G$ is of type $F_n$ if and only if $(\pi_i(X_k))_k$ is essentially trivial for $i < n$.
\end{theorem}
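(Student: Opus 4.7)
My plan is to prove Brown's criterion in the tdlc setting by the standard two-directional argument, using equivariant obstruction theory for proper discrete $G$-CW complexes. The key underlying technical tool is that if $C$ is a proper discrete $G$-CW complex whose $k$-cell stabilizers have enough finiteness and $D$ is a sufficiently connected $G$-CW complex with compact open cell stabilizers, then $G$-equivariant maps $C \to D$ can be constructed, extended, and homotoped cell-by-cell via equivariant obstruction theory; this is precisely the content of Propositions 3.13 and 4.7 of \cite{CastellanoCorobCook20}, which I would invoke as black boxes. Since a type $F_n$ model $Y$ has compact open (hence type $F_\infty$) cell stabilizers, no stabilizer constraint appears at the $Y$-end; the hypothesis that $k$-cells of $X$ have stabilizer of type $F_{n-k}$ will enter precisely when mapping \emph{out of} $X$.

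For the forward direction, let $Y$ be a contractible proper discrete $G$-CW complex with $Y^{(n)}$ cocompact. Equivariant obstruction, using the $(n-1)$-connectedness of $X$, yields a $G$-map $\phi\colon Y^{(n)}\to X$ whose cocompact image lies in some $X_{k_0}$. For any fixed $k$, a dual obstruction argument — using contractibility of $Y$ together with the stabilizer hypothesis on cells of $X$ of dimension at most $n-1$ — produces a $G$-map $\psi\colon X_k^{(n-1)}\to Y$. The two $G$-maps $\phi\psi$ and the inclusion $X_k^{(n-1)}\hookrightarrow X$ both land in the $(n-1)$-connected space $X$, so one more obstruction argument yields an equivariant homotopy between them; cocompactness places this homotopy in some $X_{k'}\supseteq X_{k_0}$. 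Given $\alpha\colon S^i \to X_k$ with $i < n$, cellular approximation puts $\alpha$ into $X_k^{(n-1)}$; the composite $\psi\alpha$ bounds a disk in $Y$ by compactness and contractibility; $\phi$ transports this disk into $X_{k_0}$; and the homotopy $\phi\psi\simeq\mathrm{incl}$ transports it further into $X_{k'}$, producing the required null-homotopy of $\alpha$.

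For the converse, assume the filtration $(X_k)$ is essentially $(n-1)$-connected. I would construct a contractible proper discrete $G$-CW complex $Z$ with $Z^{(n)}$ cocompact by inductively attaching $G$-orbits of $j$-cells for $j=1,\dots,n$ that kill homotopy classes in $X$ already known to die further along the filtration: essential triviality supplies a cocompact stock of fillings in each dimension, and the hypothesis that $j$-cell stabilizers of $X$ are of type $F_{n-j}$ ensures that these attachings can be realized equivariantly with compact open isotropy. After exhausting dimensions through $n$, one attaches cells of dimension $>n$ without cocompactness constraint to enforce contractibility, and $Z$ witnesses that $G$ is of type $F_n$.

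The main obstacle is the equivariant obstruction machinery for proper discrete $G$-CW complexes — verifying in each degree that ``stabilizer of a $k$-cell of type $F_{n-k}$'' is precisely the right hypothesis to equivariantly realize the relevant $(n-k-1)$-connected extension data. Once this is taken for granted via the cited propositions, the remaining work is bookkeeping: tracking indices $k_0, k, k'$ so that composites and comparison homotopies all land in a common piece of the filtration, and verifying that the cells attached in the converse direction inherit the proper discrete structure. I expect no surprises beyond routine care in these steps.
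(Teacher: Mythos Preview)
The paper does not prove this theorem; it is quoted as a black box, described as ``a combination of Proposition~3.13, Theorem~4.7, and Theorem~4.10 of \cite{CastellanoCorobCook20}''. Your proposal invokes two of the same three cited results and sketches around them the standard Brown-criterion argument (build $G$-maps back and forth between a type-$F_n$ model and the filtration via equivariant obstruction, then compare), so you are taking essentially the same route as the paper, merely unpacking the citation one level further.
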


\section{Coarse connectivity}\label{sec:coarse_connectivity}
Alonso \cite{Alonso94} proved that finiteness properties of discrete groups are coarse connectivity properties and thus coarse invariants (he only stated it for finitely generated groups). Using work of Roe \cite{Roe03} and Rosendal \cite{Rosendal22} one can define coarse connectivity properties for arbitrary topological groups which, by analogy, are candidate definitions for finiteness properties of groups.

If $T$ is a set we let $\scx T$ denote the abstract simplicial complex that models the simplex on $T$, i.e.\ its vertex set is $T$ and the abstract simplices are finite subsets of $T$. If $T$ is a metric space and $r > 0$, the \emph{Vietoris--Rips complex} of $T$ with radius $r$ is the subcomplex 
\[
  \VR_r(T) \defeq \{s \in \scx T \mid \forall v,w \in s, \ d(v,w) \le r\}
\]
of $\scx T$ (it is more usual to require the inequality to be strict, but the difference will be immaterial in what follows).

There are two generalizations of metric spaces, formalizing behaviour close to zero and close to infinity, respectively. To introduce them, for subsets $U,V \subseteq T \times T$ we introduce the notations
\begin{align*}
  U^{-1} &= \{ (v,w) \in T \times T \mid (w,v) \in U\} &\text{and}\\
  U \circ V &= \{ (u,w) \in T \times T \mid \exists v \in T\ (u,v) \in U, (v,w) \in V\}\text{.}
\end{align*}
The structure of a \emph{uniform space} \cite[Définition~II.1.1]{BourbakiTG15} on $T$ consists of subsets $\calU \subseteq T \times T$ satisfying the following conditions:
\begin{enumerate}
  \item Supersets and finite intersections of elements in $\calU$ are in $\calU$.\label{item:filter}
  \item Every $U \in \calU$ contains the diagonal $\{(v,v) \mid v \in T\}$.
  \item If $U \in \calU$ then $U^{-1} \in \calU$.
  \item For $V \in \calU$ there exists a $U \in \calU$ with $U \circ U \subseteq V$.
\end{enumerate}
The structure of a \emph{coarse space} \cite[Definition~2.3]{Roe03} on $T$ consists of subsets $\calE \subseteq T \times T$ satisfying the following conditions:
\begin{enumerate}
  \item Subsets and finite unions of elements in $\calE$ are in $\calE$.\label{item:ideal}
  \item The diagonal $\{(v,v) \mid v\in T\}$ is in $\calE$.
  \item If $U \in \calU$ then $U^{-1} \in \calU$.
  \item For $U,V \in \calE$ also $U \circ V \in \calE$.
\end{enumerate}
A map $f \colon X \to Y$ of coarse spaces is \emph{bornologous} if for $U \in \calE_X$ the set $(f \times f)(U) \in \calE_Y$.

If $\calS$ is either a uniform or a coarse structure on $T$ and $U \in \calS$ we define the Vietoris--Rips complex
\[
  \VR_U(T) = \{s \in \scx T \mid\forall v,w \in s\ (v,w) \in U\}
\]
(which only depends on $U \cap U^{-1}$).
Condition~\eqref{item:filter} asserts that $(\VR_U(T))_{U \in \calU}$ is directed upwards and downwards:
\begin{center}
\begin{tikzcd}
                          & \VR_{U \cup V}(T)                                   &                           \\
\VR_U(T) \arrow[ru, hook] &                                                     & \VR_V(T) \arrow[lu, hook] \\
                          & \VR_{U \cap V}(T) \arrow[ru, hook] \arrow[lu, hook] &                          
\end{tikzcd}
\end{center}

We say that a coarse space $(T,\calU)$ is \emph{coarsely $n$-connected} if for every $U \in \calU$ there exists a $V \in \calU$ containing $U$ such that the maps $\pi_i(\VR_U(T) \hookrightarrow \VR_V(T))$ induced between homotopy groups are trivial for $i \le n$.

Let $(X,\calE_X)$ and $(Y,\calE_Y)$ be coarse spaces. We say that maps $f,g \colon X \to Y$ are \emph{$U$-close}, where $U \in \calE_Y$ if $\{(f(x),g(x)) \mid x \in X\} \in U \cap U^{-1}$. We say that $Y$ is a \emph{coarse retract} of $X$ if there are bornologous maps $i \colon Y \to X$ and $r \colon X \to Y$ such that $\id \colon Y \to Y$ and $r \circ i \colon Y \to Y$ are $U$-close for some $U \in \calE_Y$. We say that $X$ and $Y$ are \emph{coarsely equivalent} if there are maps $i$ and $r$ as before satisfying in addition that $\id \colon X \to X$ and $i \circ r \colon X \to X$ are $U'$-close for some $U' \in \calE_X$. The following is a \cite[Theorem~8]{Alonso94} in this context:

\begin{proposition}
  Let $Y$ be a coarse retract of $X$. If $X$ is coarsely $n$-connected then so is $Y$. In particular, if $X$ and $Y$ are coarsely equivalent then $X$ is coarsely $n$-connected if and only if $Y$ is.
\end{proposition}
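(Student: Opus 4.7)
The plan is to fix $U' \in \calE_Y$ and construct $V' \in \calE_Y$ with $V' \supseteq U'$ such that the inclusion $\iota \colon \VR_{U'}(Y) \hookrightarrow \VR_{V'}(Y)$ is trivial on $\pi_k$ for $k \le n$. The idea is to ferry a spheroid from $\VR_{U'}(Y)$ into a Vietoris--Rips complex of $X$ via $i$, null-homotope it there using coarse $n$-connectedness of $X$, transport the null-homotopy back via $r$ to some Vietoris--Rips complex of $Y$, and then exploit the $U$-closeness of $r \circ i$ to $\id_Y$ to identify the resulting map with $\iota$ up to homotopy.

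First I would unpack bornologicity: since $i$ is bornologous, $W \defeq (i \times i)(U')$ lies in $\calE_X$, and coarse $n$-connectedness of $X$ yields $W' \supseteq W$ in $\calE_X$ with $\pi_k(\VR_W(X) \hookrightarrow \VR_{W'}(X))$ trivial for $k \le n$. Similarly $V'' \defeq (r \times r)(W') \in \calE_Y$. I would then set
\[
  V' \defeq U' \cup V'' \cup (U' \circ U) \cup (U \circ U')\text{,}
\]
which belongs to $\calE_Y$ by the coarse space axioms and contains $U'$. Because bornologous vertex maps induce simplicial maps between the corresponding Vietoris--Rips complexes, these assemble to
\[
  \varphi \colon \VR_{U'}(Y) \xrightarrow{i_*} \VR_W(X) \hookrightarrow \VR_{W'}(X) \xrightarrow{r_*} \VR_{V''}(Y) \hookrightarrow \VR_{V'}(Y)\text{.}
\]

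Next I would verify that $\varphi$ is contiguous to $\iota$. Given a simplex $\sigma \in \VR_{U'}(Y)$ and vertices $y_a, y_b \in \sigma$, the four types of pairs $(y_a, y_b) \in U'$, $(r(i(y_a)), r(i(y_b))) \in V''$, $(y_a, r(i(y_b))) \in U' \circ U$, and $(r(i(y_a)), y_b) \in U \circ U'$ (using that $U$-closeness places both $(y, r(i(y)))$ and $(r(i(y)), y)$ in $U$) all lie in $V'$, so $\sigma \cup r(i(\sigma))$ is a simplex of $\VR_{V'}(Y)$ and contiguity holds. Consequently $\varphi \simeq \iota$, and for any $\alpha \colon S^k \to \VR_{U'}(Y)$ with $k \le n$ the composition $i_* \circ \alpha$ is null-homotopic in $\VR_{W'}(X)$; applying $r_*$ produces a null-homotopy of $\varphi \circ \alpha \simeq \iota \circ \alpha$, which yields coarse $n$-connectedness of $Y$. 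The main obstacle is the combinatorial bookkeeping needed to make a single $V'$ absorb all pairs arising in the contiguity step; once the four types above are isolated the rest is formal. The ``in particular'' then follows because a coarse equivalence exhibits each space as a coarse retract of the other.
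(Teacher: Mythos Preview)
Your proof is correct and follows essentially the same route as the paper's: push $U'$ forward along $i$, use coarse $n$-connectedness of $X$, pull back along $r$, and then show that $r_* \circ i_*$ is contiguous to the inclusion in a sufficiently large Vietoris--Rips complex. The only cosmetic difference is that the paper factors the contiguity step into a separate lemma (for $W$-close maps $f,g$ the induced maps $f_*,g_*$ into $\VR_{V \circ W}$ are homotopic), whereas you carry it out inline with an explicit $V' = U' \cup V'' \cup (U' \circ U) \cup (U \circ U')$; both choices work.
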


\begin{proof}
  Let $\iota \colon Y \to X$ and $r \colon X \to Y$ be such that $\id_Y$ and $r \circ \iota$ are $W'$-close for some $W' \in \calE_Y$.
  Let $U \in \calE_Y$ be arbitrary and let $V = (\iota \times \iota)(U)$. Since $X$ is coarsely $n$-connected, there is a $V' \in \calE_X$ containing $V$ such that, for each $i \le n$, the map $\pi_i(\VR_{V}(X)) \to  \pi_i(\VR_{V'}(X))$ induced by inclusion is trivial. Let $U' = (r \times r)(V')$. Then, for each $i \le n$, the map $(r \circ \iota)_* \colon \pi_i(\VR_U(Y)) \to \pi_i(\VR_{U'}(Y))$ factors through the aforementioned map and is thus trivial. The following lemma shows that, setting $W = U' \circ W'$, the maps $(r \circ \iota)_*,\id_* \colon \VR_U(X) \to \VR_W(X)$ are homotopic, showing that $(\VR_U(Y))_U$ is essentially $n$-connected.
\end{proof}

\begin{lemma}
  Let bornologous maps $f,g \colon X \to Y$ be $W$-close for $W \in \calE_Y$. Let $U \in \calE_X$ be arbitrary and put $V = (f,f)(U) \cup (g,g)(U)$. Then $f_*,g_* \colon \VR_U(X) \to \VR_{V \circ W}(Y)$ are homotopic.
\end{lemma}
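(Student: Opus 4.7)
The strategy is the classical simplicial prism (straight-line) homotopy. Let me first observe that we may enlarge $V$ and $W$ so that they contain the diagonal: this only enlarges the codomain complex $\VR_{V \circ W}(Y)$, and the hypotheses are preserved since the diagonal belongs to any coarse structure. Once $V, W \supseteq \Delta_Y$, both $V$ and $W$ are contained in $V \circ W$: for $(a,c) \in V$, the pair $(c,c) \in W$ witnesses $(a,c) \in V \circ W$, and similarly for $W$.

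Next, I verify the key combinatorial claim: for every simplex $\sigma = \{x_0, \ldots, x_k\}$ of $\VR_U(X)$, the set
\[
  \tau(\sigma) \defeq \{f(x_0), \ldots, f(x_k), g(x_0), \ldots, g(x_k)\}
\]
is a simplex of $\VR_{V \circ W}(Y)$. Indeed, for any two vertices in $\tau(\sigma)$ we check the three cases: pairs $(f(x_i), f(x_j))$ lie in $(f\times f)(U) \subseteq V \subseteq V \circ W$; pairs $(g(x_i), g(x_j))$ lie in $(g\times g)(U) \subseteq V \subseteq V \circ W$; and mixed pairs $(f(x_i), g(x_j))$ factor as $(f(x_i), f(x_j)) \in V$ followed by $(f(x_j), g(x_j)) \in W$, so lie in $V \circ W$. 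Symmetry in the roles of the two vertices is handled analogously using $W^{-1}$-closeness, which follows from $f,g$ being $W$-close (so $\{(f(x),g(x))\} \subseteq W \cap W^{-1}$).

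With this in hand, I carry out the prism construction simplex by simplex. For each $k$-simplex $\sigma = \{x_0, \ldots, x_k\}$ of $\VR_U(X)$, triangulate the geometric prism $\Delta^k \times [0,1]$ in the standard way into $k+1$ simplices of dimension $k+1$ whose vertex set is $\{(x_i, 0)\}_i \cup \{(x_i, 1)\}_i$, and send $(x_i, 0) \mapsto f(x_i)$, $(x_i, 1) \mapsto g(x_i)$. Each prism-simplex has vertex image contained in $\tau(\sigma)$, so it defines a simplex of $\VR_{V \circ W}(Y)$; thus the prism map extends to a continuous map $|\sigma| \times [0,1] \to |\VR_{V\circ W}(Y)|$ restricting to $f_*$ on $|\sigma|\times \{0\}$ and $g_*$ on $|\sigma|\times\{1\}$.

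Finally, these prism homotopies glue into a global homotopy $H \colon |\VR_U(X)| \times [0,1] \to |\VR_{V \circ W}(Y)|$: the standard prism triangulation is natural with respect to face inclusions $\Delta^{k-1} \hookrightarrow \Delta^k$, so on $|\partial \sigma| \times [0,1]$ the prism homotopy for $\sigma$ agrees with the one constructed from its faces. The only point requiring care is this naturality under face maps and the corresponding verification on the vertical sides of the prism; once that is observed, the homotopy $H$ witnesses $f_* \simeq g_*$, completing the proof.
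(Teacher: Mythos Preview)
Your argument is correct and follows the same route as the paper: the core step in both is the combinatorial claim that for every simplex $\sigma$ of $\VR_U(X)$ the set $f(\sigma)\cup g(\sigma)$ is a simplex of $\VR_{V\circ W}(Y)$, i.e.\ that $f_*$ and $g_*$ are contiguous. The paper stops there, implicitly invoking the standard fact that contiguous simplicial maps are homotopic; you spell this out via the prism triangulation, which is exactly the classical proof of that fact. One small remark: your preliminary enlargement of $V$ and $W$ by the diagonal technically proves the homotopy in a slightly larger target complex than the one named in the statement, though this is harmless for the application. The paper's proof in fact needs the same point (to conclude $V\subseteq V\circ W$ for the non-mixed pairs) but leaves it implicit.
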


\begin{proof}
  We want to show that if $s \in \VR_U(X)$ is a simplex, then the simplex $f(s) \cup g(s) \in \scx Y$ lies in $\VR_{V \circ W}(Y)$. If $s \in \VR_U(X)$ is a simplex then $(v,w) \in U$ for all vertices $v$ and $w$ in $s$. It follows that \[(f(v),f(w)),(g(v),g(w)) \in V\] showing that $f(s),g(s) \in \VR_V(X)$. Finally $(f(v),g(v)),(f(w),g(w)) \in W$ by assumption. It follows that $(f(v),g(w)),(g(v),f(w)) \in V \circ W$, showing that $f(s) \cup g(s) \in \VR_{V \circ W}(Y)$.
\end{proof}

Let now $G$ be a topological group. It carries a canonical uniform structure generated by the sets $U_V = \{(x,y) \in G \times G \mid x^{-1}v \in V\}$ where $V$ ranges over identity neighborhoods in $G$. A coarse structure $\calK$ is generated by the sets $U_C = \{(x,y) \in G \mid x^{-1}y \in C\}$ where $C$ ranges over relatively compact sets. Rosendal \cite[Section~2]{Rosendal22} defines a canonical coarse structure $\calCB$ on $G$ by declaring that $U \in \calE$ if and only if $\{ x^{-1}y \mid (x,y) \in U\}$ is bounded for every continuous left-invariant pseudo-metric on $G$.

We say that a topological group is \emph{coarsely $n$-connected} if $(X,\calCB)$ is coarsely $n$-connected. If $G$ is locally compact $\sigma$-compact then $\calK = \calCB$ \cite[Section~2.4]{Rosendal22}.

Note that the choice over $\calCB$ rather than $\calK$ has surprising effects even for uncountable discrete groups:

\begin{example}\label{ex:symmetric_group}
  Let $\Sym(\N)$ be equipped with the discrete topology. According to \cite[Example~2.26]{Rosendal22} $(\Sym(\N),\calCB)$ is bounded, hence coarsely $n$-connected for every $n$. Of course $(\Sym(\N),\calK)$ is not even coarsely $0$-connected.
\end{example}

\section{Filtrations}\label{sec:filtration}

As we have seen in the previous sections, there are various filtrations by cocompact spaces associated to a locally compact group $G$. In defining these filtrations there are (at least) three different aspects in each of which one can choose among two flavors, so there is a total of eight variants. The first aspect concerns the coarse structure, the obvious choices being $\calK$ and $\calCB$. The second aspect concerns the kind of complex, namely Čech or Vietoris--Rips. The third aspect is whether to use simplicial complexes or simplicial sets. The purpose of this section is to see that among these choices the choice of the coarse structure is the only one that affects $n$-connectedness.

Once the coarse structure is fixed, neither the group structure nor the topology play a role any more, so we consider a coarse space $(X,\calE)$ from now on.

\begin{definition}
  For $U \in \calE$ the \emph{Vietoris--Rips subsets} of diameter $U$ is
  \[
    \VR_U = \{A \subseteq X \mid \forall a,b \in A\ (a,b) \in U\}.
  \]
  The set of \emph{Čech subsets} of radius $U$ is
  \[
   \check{C}_U = \{A \subseteq X \mid \exists a \in X\ \forall b \in A\ (a,b) \in U\}.
  \]
\end{definition}

We make a few basic observations:

\begin{observation}\label{obs:vr_cech}
  \begin{enumerate}
    \item $\VR_U$ and $\check{C}_U$ are closed under passage to subsets.
    \item $\VR_U \subseteq \VR_V$ and $\check{C}_U \subseteq \check{C}_V$ for $U,V \in \calE$ with $U \subseteq V$.
    \item $\VR_U \subseteq \check{C}_{U}$ for $U \in \calE$.
    \item $\check{C}_U \subseteq \VR_{U \circ U}$ for $U \in \calE$.
  \end{enumerate}
\end{observation}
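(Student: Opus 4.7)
The plan is that all four statements are direct verifications once the definitions of $\VR_U$ and $\check{C}_U$ are unwound, and the only nontrivial point is a mild symmetry issue in item (4). I would dispatch (1) and (2) in one line each, give (3) as an easy ``choose any center'' argument, and spend the bulk of the proof on (4).

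\textbf{Items (1) and (2).} For (1), if $A \in \VR_U$ and $A' \subseteq A$, then $(a,b) \in U$ for all $a,b \in A'$ because this already held for $a,b \in A$. If $A \in \check{C}_U$ with witness $a \in X$, the same $a$ witnesses $A' \in \check{C}_U$. For (2), if $U \subseteq V$ then every defining condition for membership in $\VR_U$ or $\check{C}_U$ implies the corresponding condition with $U$ replaced by $V$.

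\textbf{Item (3).} If $A \in \VR_U$ is empty, it lies in $\check{C}_U$ vacuously. Otherwise choose any $a \in A$; then $(a,b) \in U$ for every $b \in A$ by the $\VR_U$ condition, so $a \in X$ witnesses $A \in \check{C}_U$.

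\textbf{Item (4).} Suppose $A \in \check{C}_U$ with center $a \in X$, so $(a,b) \in U$ for every $b \in A$. The naive computation, for $a',b' \in A$, yields $(a',a) \in U^{-1}$ and $(a,b') \in U$, hence $(a',b') \in U^{-1} \circ U$ rather than $U \circ U$. The point is that we are free to symmetrize: by the coarse space axioms $U^{-1} \in \calE$ and $U \cup U^{-1} \in \calE$, and since $\VR_W$ depends only on $W \cap W^{-1}$, we may replace $U$ throughout by $U \cup U^{-1}$ without changing either side of the inclusion (the Čech side only gets larger). Assuming then that $U$ is symmetric, the computation above gives $(a',b') \in U \circ U$ for all $a',b' \in A$, i.e.\ $A \in \VR_{U\circ U}$.

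\textbf{Expected difficulty.} The only real pitfall is the symmetry subtlety in (4); everything else is a mechanical unpacking of definitions. I would mention the symmetrization trick explicitly (or tacitly adopt the paper's convention from the $\VR_U(T)$ definition that one may assume $U = U^{-1}$), so that the reader is not left puzzled by the appearance of $U \circ U$ rather than $U^{-1} \circ U$.
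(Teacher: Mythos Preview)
The paper records this as an Observation and gives no proof, so there is no argument to compare against; your verifications of (1)--(3) are exactly the expected one-line unpackings.

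On (4) you have correctly identified a genuine issue: as literally stated, item (4) fails for non-symmetric $U$. (Take $X=\{0,1,2\}$ with the bounded coarse structure and $U$ the diagonal together with $(0,1)$ and $(0,2)$; then $\{1,2\}\in\check{C}_U$ with center $0$, but $U\circ U=U$ and $(1,2)\notin U$, so $\{1,2\}\notin\VR_{U\circ U}$.) Your proposed remedy is the right one in spirit, but the phrase ``without changing either side of the inclusion'' is not accurate: replacing $U$ by $U\cup U^{-1}$ genuinely enlarges the right-hand side from $\VR_{U\circ U}$ to $\VR_{(U\cup U^{-1})\circ(U\cup U^{-1})}$. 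What your computation actually yields is $\check{C}_U \subseteq \VR_{U^{-1}\circ U}$, or equivalently the stated inclusion under the hypothesis $U=U^{-1}$. This is all that is needed downstream: the only use of (4) is to interleave the Čech and Vietoris--Rips filtrations, and since $U^{-1}\circ U\in\calE$, it serves that purpose just as well as $U\circ U$. So your diagnosis and your cure are both sound; just state the corrected conclusion precisely rather than claiming the inclusion holds verbatim.
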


We write $\scx X$ for the abstract simplicial complex that models the simplex on $X$, that is, $\scx X$ is the set of finite subsets of $X$. We write $\sset X$ for the simplicial set that models the simplex on $X$, that is, the set $\sset X[n]$ of $n$-simplices consists of $(n+1)$-tuples of elements of $X$ with the obvious face and degeneracy maps.

\begin{definition}
  Let $\calA$ be a set of subsets of $X$ that is closed under taking subsets (such as $\VR_U$ or $\check{C}_U$). The associated simplicial subcomplex of $\scx X$ is
  \[
    \scx(\calA) = \{ A \in \calA \mid A \text{ finite}\}.
  \]
  The associated sub-simplicial set $\sset(\calA)$ of $\sset X$ is given by
  \[
    \sset(\calA)[n] = \{ (a_0,\ldots,a_n) \mid \exists A \in \calA\ a_0,\ldots,a_n \in A\} = \bigcup \{A^{n+1} \mid A \in \calA\}.
  \]
\end{definition}

In summary, for each coarse structure $\calE$ we obtain four filtrations:
\begin{enumerate}
  \item $(\abs{\scx(\VR_U)})_{U \in \calE}$ Vietoris--Rips simplicial complexes,
  \item $(\abs{\scx(\check{C}_U)})_{U \in \calE}$ Čech simplicial complexes,
  \item $(\abs{\sset(\VR_U)})_{U \in \calE}$ Vietoris--Rips simplicial sets,
  \item $(\abs{\sset(\check{C}_U)})_{U \in \calE}$ Čech simplicial sets.
\end{enumerate}

 Two of these we have already encountered:

\begin{lemma}\label{lem:comparison}
  \begin{enumerate}
    \item The filtration $(\abs{\scx(\VR_U)})_{U \in \calE}$ is the filtration used to define coarse connectivity properties for $\calE$.
    \item The filtration $(\abs{\sset(\check{C}_U)})_{U \in \calK}$ is the filtration used to define compactness properties $C_n$.
  \end{enumerate}
\end{lemma}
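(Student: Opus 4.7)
The plan is to unwind definitions in both cases; no substantive topology or group theory is needed.

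For part~(1), the space $\scx(\VR_U)$ is by construction the abstract simplicial complex on $X$ whose simplices are the finite subsets $A$ satisfying $(a,b) \in U$ for all $a,b \in A$. This is exactly the defining condition for the Vietoris--Rips complex $\VR_U(X)$ introduced in Section~\ref{sec:coarse_connectivity}, so the two objects literally coincide and there is nothing further to verify.

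For part~(2), I would fix a relatively compact $C \subseteq G$ and compute $\sset(\check{C}_{U_C})$ for the generating entourage $U_C = \{(x,y) \in G \times G \mid x^{-1}y \in C\}$ of $\calK$. By definition, a subset $A \subseteq G$ lies in $\check{C}_{U_C}$ if and only if there exists $g \in G$ with $g^{-1}b \in C$ for every $b \in A$, i.e.\ $A \subseteq gC$. Hence
\[
  \sset(\check{C}_{U_C})[n] \;=\; \bigcup_{A \in \check{C}_{U_C}} A^{n+1} \;=\; \bigcup_{g \in G} (gC)^{n+1} \;=\; (G \cdot EC)[n],
\]
so $\sset(\check{C}_{U_C}) = G \cdot EC$ as sub-simplicial sets of $\sset G = EG$, and geometric realization transfers this identification.

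It remains to compare the two index sets. Since $U_C \cup U_{C'} \subseteq U_{C \cup C'}$, $U_C^{-1} = U_{C^{-1}}$, and $U_C \circ U_{C'} = U_{CC'}$, every element of $\calK$ is contained in some $U_C$ with $C$ relatively compact, so the subfamily $\{U_C : C \text{ compact}\}$ is cofinal in $\calK$. Consequently the filtration $(\abs{\sset(\check{C}_U)})_{U \in \calK}$ and the filtration $(\abs{G \cdot EC})_{C \text{ compact}}$ used to define type $C_n$ are cofinal in one another and induce the same notion of essential $(n-1)$-connectedness. The main (and only) obstacle is this cofinality bookkeeping; everything else is formal.
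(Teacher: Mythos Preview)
Your argument is correct and follows the same route as the paper: both proofs simply unravel the definitions and identify $\sset(\check{C}_{U_C})$ with $G \cdot EC$ via the observation that $A \in \check{C}_{U_C}$ iff $A \subseteq gC$ for some $g$. The only difference is that you make the cofinality of $\{U_C\}$ in $\calK$ explicit, whereas the paper's proof tacitly passes to this cofinal family when it writes ``a set $U \in \calK$ is of the form $\{(g,gh) \mid g \in G,\ h \in C\}$''.
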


\begin{proof}
  For $(\scx(\VR_U))_{U \in \calE}$ the claim is clear. For $\sset(\check{C}_U)$ we unravel the definitions: a set $U \in \calK$ is of the form $\{(g,g\cdot h) \mid g \in G, h \in C\}$ for some relatively compact $C \subseteq G$. A tuple $(v_0,\ldots,v_k)$ is a simplex in $\sset(\check{C}_U)$ if and only if there is an $a \eqdef g \in G$ such that $(a,v_i) \in U$, that is $a^{-1}v_i \eqdef h_i \in C$. The simplices of this form are precisely those contained in $G \cdot EC$.
\end{proof}

\begin{proposition}\label{prop:equivalent_filtrations}
  Let $(X,\calE)$ be a coarse space and let $n \in \N$. If one of the filtrations $(\abs{\scx(\VR_U)})_{U \in \calE}$, $(\abs{\scx(\check{C}_U)})_{U \in \calE}$, $(\abs{\sset(\VR_U)})_{U \in \calE}$, $(\abs{\sset(\check{C}_U)})_{U \in \calE}$ is essentially $n$-connected then they all are.
\end{proposition}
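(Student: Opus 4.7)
The plan is to decouple the proposition into two independent comparisons: Vietoris--Rips versus Čech for a fixed flavor (simplicial complex or simplicial set), and simplicial complex versus simplicial set for a fixed choice of $\VR$ or $\check{C}$. Combining the two gives the result.

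For the VR versus Čech comparison, Observation~\ref{obs:vr_cech} provides inclusions $\VR_U \subseteq \check{C}_U \subseteq \VR_{U \circ U}$, which pass to both the $\scx$ and $\sset$ versions. Since $\calE$ is closed under composition, replacing $U$ by $U \circ U$ is cofinal, and a direct diagram chase shows essential $n$-connectedness transfers in both directions.

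For the complex versus set comparison, I fix a total order on $X$ and introduce natural maps $f \colon \abs{\sset(\calA)} \to \abs{\scx(\calA)}$ sending a tuple $(a_0,\ldots,a_k)$ to its underlying set (which lies in $\calA$ since $\calA$ is closed under subsets), and $g \colon \abs{\scx(\calA)} \to \abs{\sset(\calA)}$ sending an ordered simplex $\{a_0<\cdots<a_k\}$ to the tuple $(a_0,\ldots,a_k)$. Tautologically $f \circ g = \id$. The crucial claim is that $g \circ f \simeq \id$ on $\abs{\sset(\calA)}$ via a homotopy taking place in $\abs{\sset(\calA)}$ itself. Granting this, a short diagram chase handles both directions: if the inclusion $\abs{\scx(\calA_U)} \hookrightarrow \abs{\scx(\calA_V)}$ is trivial on $\pi_i$ for $i \le n$, then so is $\abs{\sset(\calA_U)} \hookrightarrow \abs{\sset(\calA_V)}$, because the factorization $\abs{\sset(\calA_U)} \xrightarrow{f} \abs{\scx(\calA_U)} \hookrightarrow \abs{\scx(\calA_V)} \xrightarrow{g} \abs{\sset(\calA_V)}$ equals $g \circ f \simeq \id$ composed with the inclusion; conversely $\abs{\scx(\calA_U)} \xrightarrow{g} \abs{\sset(\calA_U)} \hookrightarrow \abs{\sset(\calA_V)} \xrightarrow{f} \abs{\scx(\calA_V)}$ equals the inclusion on the nose because $f \circ g = \id$.

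The main obstacle is the homotopy $g \circ f \simeq \id$ on $\abs{\sset(\calA)}$. I would prove it by an acyclic carrier argument: assign to each simplex $(a_0,\ldots,a_k)$ of $\sset(\calA)$ the subcomplex $\abs{\sset(A)} \subseteq \abs{\sset(\calA)}$ where $A = \{a_0,\ldots,a_k\} \in \calA$. This assignment is compatible with faces and degeneracies (the underlying set only shrinks or is preserved), each $\abs{\sset(A)}$ is contractible (as the geometric realization of the free simplicial set on a finite set), and both $\id$ and $g \circ f$ are carried by it. One then constructs the homotopy skeleton by skeleton: on each non-degenerate $k$-simplex, the boundary data and the endpoint maps already lie inside the contractible $\abs{\sset(A)}$, so the extension over $\Delta^k \times I$ exists by contractibility and can be assembled consistently across the simplicial set.
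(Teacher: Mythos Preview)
Your proof is correct and follows the same two-step decomposition as the paper: first interleave $\VR_U$ and $\check{C}_U$ via Observation~\ref{obs:vr_cech}, then compare $\scx(\calA)$ with $\sset(\calA)$. The first step is identical to the paper's.

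For the second step the paper simply cites an external result (\cite[Proposition~15]{AntolinVillareal}) asserting that the collapse map $\abs{\sset A} \to \abs{\scx A}$ is a homotopy equivalence, and infers the equivalence of essential $n$-connectedness from naturality. Your route is more explicit and self-contained: you build the section $g$ from a fixed total order, get $f\circ g=\id$ for free, and produce the homotopy $g\circ f\simeq\id$ inside $\abs{\sset(\calA)}$ by an acyclic-carrier argument with carriers $\abs{\sset(A)}=\abs{EA}$, which are contractible. This is exactly the kind of argument that underlies the cited result, so you are effectively proving what the paper quotes. The payoff is that your version makes transparent that the homotopy stays within $\abs{\sset(\calA_V)}$ (no enlargement of $V$ is needed), which is precisely what the diagram chase requires; the paper's appeal to a homotopy equivalence leaves this naturality implicit.
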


\begin{proof}
  It follows from Observation~\ref{obs:vr_cech} that $\scx(\VR_U) \subseteq \scx(\check{C}_U) \subseteq \scx(\VR_{U \circ U})$ from which it follows that one of the filtrations $(\abs{\scx(\VR_U)})_{U \in \calE}$ is essentially $n$-connected if and only if $(\abs{\scx(\check{C}_U)})_{U \in \calE}$ is. Similarly $\sset(\VR_U) \subseteq \sset(\check{C}_U) \subseteq \sset(\VR_{U \circ U})$ implies that $({\sset(\VR_U)})_{U \in \calE}$ is essentially $n$-connected if and only if $({\sset(\check{C}_U)})_{U \in \calE}$.

  Finally, for $A \subseteq X$  there is an obvious map $\abs{\sset A} \to \abs{\scx A}$ that is a homotopy equivalence by \cite[Proposition 15]{AntolinVillareal}. From this it follows that $(\abs{\scx(\VR_U)})_{U \in \calE}$ is essentially $n$-connected if and only if $(\abs{\sset(\VR_U)})_{U \in \calE}$ is; similarly for $(\abs{\scx(\VR_U)})_{U \in \calE}$ and $(\abs{\sset(\VR_U)})_{U \in \calE}$.
\end{proof}

\begin{corollary}\label{cor:cn_coarse_connected}
  A locally compact group $G$ is of type $C_n$ if and only if it is coarsely $(n-1)$-connected for the coarse structure $\calK$.
\end{corollary}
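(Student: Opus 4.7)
The plan is to observe that both conditions in the statement are, by the machinery just developed, essential $(n-1)$-connectedness of two of the four filtrations associated to the coarse structure $\calK$ on $G$, and then to invoke Proposition~\ref{prop:equivalent_filtrations} to conclude that these two essential connectivity conditions coincide.

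More precisely, I would first unpack the left-hand side: by the definition of type $C_n$ recalled in Section~1, $G$ is of type $C_n$ if and only if the directed system $(\abs{G \cdot EK})_{K}$ of subsets of $\abs{EG}$, with $K$ ranging over (relatively) compact subsets of $G$, is essentially $(n-1)$-connected. By Lemma~\ref{lem:comparison}\,(2), this system is exactly the filtration $(\abs{\sset(\check{C}_U)})_{U \in \calK}$. Next I would unpack the right-hand side: by the definition given in Section~\ref{sec:coarse_connectivity}, $G$ is coarsely $(n-1)$-connected for $\calK$ iff $(\VR_U(G))_{U \in \calK} = (\abs{\scx(\VR_U)})_{U \in \calK}$ is essentially $(n-1)$-connected, which is exactly the content of Lemma~\ref{lem:comparison}\,(1) in the present notation.

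At this point both conditions have been rephrased as essential $(n-1)$-connectedness of two of the four filtrations listed after Lemma~\ref{lem:comparison}, and Proposition~\ref{prop:equivalent_filtrations}, applied to the coarse space $(G,\calK)$ with $n$ replaced by $n-1$, asserts that essential $(n-1)$-connectedness of one of these four filtrations is equivalent to essential $(n-1)$-connectedness of any other. This yields the desired equivalence.

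There is no substantial obstacle: all the work has already been carried out in Proposition~\ref{prop:equivalent_filtrations} (the Observation~\ref{obs:vr_cech} inclusions between Vietoris--Rips and Čech subsets, and the homotopy equivalence $\abs{\sset A}\simeq \abs{\scx A}$ from \cite{AntolinVillareal}). The only thing to take care of is bookkeeping between the indexing convention ``type $C_n$ means essentially $(n-1)$-connected'' on the one hand and ``coarsely $(n-1)$-connected'' on the other, so that the degree matches up correctly on both sides.
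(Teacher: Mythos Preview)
Your proposal is correct and follows exactly the same approach as the paper: identify each side of the equivalence with essential $(n-1)$-connectedness of one of the four filtrations via Lemma~\ref{lem:comparison}, and then invoke Proposition~\ref{prop:equivalent_filtrations} to conclude. The paper's proof is just a two-sentence version of what you wrote.
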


\begin{proof}
  By Lemma~\ref{lem:comparison} $G$ is of type $C_n$ if and only if $(\abs{\sset(\check{C}_U)})_{U \in \calK}$ is essentially $(n-1)$-connected. By Proposition~\ref{prop:equivalent_filtrations} this is the case if and only if $(\abs{\scx(\VR_U)})_{U \in \calK}$ is essentially $(n-1)$-connected, which by definition is equivalent to $G$ being coarsely $(n-1)$-connected for $\calK$.
\end{proof}


\section{Connected groups}

In this section we prove Corollary~\ref{cor:main} in the following formulation:

\begin{theorem}\label{thm:connected}
  A connected, locally compact group $G$ is of type $C_n$ for all $n$.
\end{theorem}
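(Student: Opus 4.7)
The plan is to use Corollary~\ref{cor:cn_coarse_connected} to translate the question into coarse connectivity, and then to exploit the structure theory of connected locally compact groups to coarsely identify $G$ with a Euclidean space, which is trivially coarsely $n$-connected for all $n$.

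First, by Corollary~\ref{cor:cn_coarse_connected} it suffices to show that $G$ is coarsely $(n-1)$-connected with respect to $\calK$ for every $n$. To do so, I would invoke Gleason-Yamabe to produce a compact normal subgroup $N$ of $G$ with $G/N$ a Lie group; being a continuous image of the connected group $G$, the quotient $G/N$ is itself connected. The Iwasawa-Malcev-Mostow theorem then provides a maximal compact subgroup $K$ of $G/N$ with $(G/N)/K$ homeomorphic to a Euclidean space $\R^d$. Taking $H$ to be the preimage of $K$ in $G$, $H$ is an extension of compact groups and hence compact, and $G/H \cong \R^d$.

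Next I would verify that the quotient map $\pi\colon G \to G/H$ is a coarse equivalence for $\calK$: any set-theoretic section $s\colon G/H \to G$ is bornologous (the image of a $\calK$-controlled set lies in some $HCH$ with $C$ relatively compact, which is again relatively compact), and the compositions $\pi \circ s$ and $s \circ \pi$ are uniformly close to the identity because elements in the same $H$-coset are $\calK$-close. Under the identification $G/H \cong \R^d$, the pushforward of $\calK$ coincides with the standard bounded coarse structure on Euclidean space, since $G$ acts on $\R^d$ preserving a homogeneous Riemannian metric and relatively compact subsets of $G$ act by uniformly bounded displacement. It remains to observe that $\R^d$ is coarsely $n$-connected for every $n$: for each controlled entourage $U$ the Vietoris-Rips complex $\VR_U(\R^d)$ is contractible, because convexity of $\R^d$ permits a deformation retraction onto an affine barycenter through simplices of the same diameter. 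Applying the coarse invariance of coarse $n$-connectedness established in Section~\ref{sec:coarse_connectivity}, $G$ is coarsely $n$-connected for all $n$, hence of type $C_n$ for all $n$.

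The hard part will be handling the structural inputs cleanly: Gleason-Yamabe and the Iwasawa-Malcev-Mostow theorem are substantial classical results, and verifying that the pushed-forward coarse structure on $G/H$ matches the standard coarse structure on $\R^d$ requires some care. A technically lighter alternative would be to prove the connected Lie case directly via Proposition~\ref{prop:at_space} applied to the proper cocompact action on a contractible manifold (the symmetric space $L/K \cong \R^d$), and then to argue separately, via a diagram chase at the level of directed systems of simplicial sets, that compact normal extensions preserve type $C_n$.
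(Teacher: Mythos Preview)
Your overall strategy mirrors the paper's: reduce via structure theory to a model space and prove coarse $(n-1)$-connectedness there. The paper passes to a connected Lie quotient $Q$ using Hilbert's fifth problem together with Proposition~\ref{prop:at_space}, and then works on the contractible Riemannian manifold $X = Q/C$ for $C$ maximal compact; your route through $G/H \cong \R^d$ simply performs both reductions at once, and your verification that $G \to G/H$ is a coarse equivalence is fine.

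The genuine gap is your final step. The claim that $\VR_U(\R^d)$ is contractible ``because convexity of $\R^d$ permits a deformation retraction onto an affine barycenter through simplices of the same diameter'' does not work as stated: the straight-line homotopy $\{v_i\} \rightsquigarrow \{(1-t)v_i + tb\}$ moves each simplex through an uncountable family of \emph{distinct} simplices of $\VR_r(\R^d)$, so the restriction to $\abs{\sigma} \times [0,1]$ does not land in any finite subcomplex and is not continuous for the weak (CW) topology on the realization. This is exactly the difficulty that the paper spends the bulk of the section on: it invokes Hausmann's theorem to get contractibility of $\VR_\varepsilon(X)$ only for \emph{small} $\varepsilon$, and then constructs (Lemmas~\ref{lem:filling_in_nbd}--\ref{lem:filling} and the proof of Proposition~\ref{prop:lie_grps}) a simplicial map $\Sd^N(\VR_R(X)^{(n)}) \to \VR_\varepsilon(X)$ skeleton by skeleton, using compactness of the space of pointed simplicial spheres and homogeneity to obtain a uniform subdivision bound. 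Your proposed ``lighter alternative'' via Proposition~\ref{prop:at_space} does not avoid the issue either: that proposition replaces one filtration by another, but one must still prove essential $(n-1)$-connectedness of the new filtration, which is again the hard part. For the Euclidean metric specifically there \emph{is} a shortcut you could use instead: the map $P_a(v) = v - \Pi_{\bar B(0,a)}(v)$ is $1$-Lipschitz (identity minus a nearest-point projection) with $\abs{P_a(v)-v}\le a$, so $P_a$ is a simplicial self-map of $\VR_r(\R^d)$ contiguous to $\id$ as maps into $\VR_{r+a}(\R^d)$; iterating collapses any finite subcomplex to $0$, yielding essential $n$-connectedness of the system. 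But this is not the argument you gave, and it relies on the Hilbert-space structure rather than mere convexity.
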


It reduces to the case of connected Lie groups, which is due to Tiemeyer \cite[Satz~5.0.1]{Tiemeyer94}:

\begin{proposition}\label{prop:lie_grps}
    Let $G$ be a connected Lie group. Then $G$ is of type $C_n$ for all $n \in \N$.
\end{proposition}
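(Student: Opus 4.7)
The plan is to exploit the action of $G$ on the homogeneous space $T = G/K$, where $K$ is a maximal compact subgroup. By the Cartan–Mal'cev–Iwasawa theorem such a $K$ exists and $T$ is diffeomorphic to some Euclidean space $\R^d$. Since $K$ is compact, $G$ acts properly on $T$, so Proposition~\ref{prop:at_space} reduces type $C_n$ for $G$ to showing that the filtration $(\abs{G \cdot EL})_{L \subseteq T \text{ compact}}$ is essentially $(n-1)$-connected.

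Next, I would reformulate this coarsely. Endowing $T$ with the pushforward of $\calK$ under the orbit map, the filtration $(\abs{G \cdot EL})_L$ coincides, up to cofinal reindexing, with the \v Cech simplicial set filtration $(\abs{\sset(\check{C}_U)})_U$ of Lemma~\ref{lem:comparison}. Proposition~\ref{prop:equivalent_filtrations} then allows me to replace this with the Vietoris--Rips filtration $(\abs{\VR_U(T)})_U$, so it suffices to show that $T$ is coarsely $(n-1)$-connected. A $G$-invariant Riemannian metric on $T$, obtained by averaging an inner product on the tangent space at a basepoint over the compact group $K$, generates the same coarse structure, so the task becomes showing that this simply connected homogeneous Riemannian manifold is coarsely $n$-connected for every $n$.

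The remaining step is a contracting-homotopy argument. Given a bounded entourage $U$, I would take a sufficiently larger $U'$ and build a nullhomotopy of $\abs{\VR_U(T)} \hookrightarrow \abs{\VR_{U'}(T)}$ in dimensions $\le n-1$ by contracting each $U$-bounded simplex along curves that remain $U'$-bounded. When $G$ is semisimple, $T$ is a symmetric space of nonpositive curvature, hence $\mathrm{CAT}(0)$, and convexity of the Riemannian distance makes this immediate. The general case is then reduced to the semisimple and simply connected solvable cases via the Iwasawa decomposition $G = KAN$ and the Levi–Mal'cev decomposition; in the solvable case one works in exponential coordinates, which identify a simply connected solvable Lie group with $\R^d$ as a manifold and allow suitable contractions. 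Controlling the contraction simplicially and uniformly across all $G$-translates of $U$ is the main technical obstacle, and it is precisely why one passes to $G/K$ with a $G$-invariant Riemannian metric rather than working on $G$ directly.
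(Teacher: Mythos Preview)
Your first two paragraphs are on target and coincide with the paper's setup: pass to $T=G/K$ with a $G$-invariant Riemannian metric, use Proposition~\ref{prop:at_space}, and reduce via Proposition~\ref{prop:equivalent_filtrations} to showing that the metric space $T$ is coarsely $n$-connected for all $n$. The divergence is entirely in your third paragraph.

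The paper does \emph{not} proceed by structural decomposition into semisimple and solvable pieces. It gives a uniform argument valid for any contractible homogeneous Riemannian manifold. The key external input is Hausmann's theorem: there is an $\varepsilon>0$ such that $\VR_r(T)$ is contractible for all $r\le\varepsilon$. Given $R>\varepsilon$, the paper constructs a simplicial map $\phi\colon \Sd^N(\VR_R(T)^{(n)})\to \VR_\varepsilon(T)$ that is the identity on the original vertices and is contiguous to $\gamma^N$ as maps into $\VR_S(T)$ for a suitable $S$. The construction is by induction on skeleta; the crucial observation (Lemma~\ref{lem:compactness}) is that the set of pointed simplicial maps $(\Sd^k\partial\Delta^m,v)\to(\VR_{\varepsilon'}(T),x)$ is compact in the $d_\infty$-metric, so finitely many of the local filling statements (Lemma~\ref{lem:filling_in_nbd}) suffice, and homogeneity transports the resulting uniform subdivision bound to all of $T$ (Lemma~\ref{lem:filling}). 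No Lie structure theory beyond the existence of $K$ and the contractibility of $G/K$ is used.

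Your route has a genuine gap in the solvable case. Identifying a simply connected solvable Lie group with $\R^d$ via exponential (or Mal'cev) coordinates is only a diffeomorphism, not a coarse equivalence with Euclidean space: already the Heisenberg group or $\mathrm{Sol}$ carry left-invariant metrics whose large-scale geometry is far from Euclidean, and the straight-line contraction in those coordinates is not distance-nonincreasing, nor even coarsely Lipschitz in any evident way. So ``allow suitable contractions'' is precisely the step that needs an argument, and you have not supplied one. The combining step via Levi--Mal'cev could in principle be handled by Theorem~\ref{thm:extension}, but that still leaves you owing a direct proof for the solvable radical, which is the same difficulty again. The paper's Hausmann-plus-compactness argument bypasses all of this case analysis; you should replace your third paragraph with that mechanism.
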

    
\begin{proof}[Proof of Theorem~\ref{thm:connected} using Proposition~\ref{prop:lie_grps}]
  The solution of Hilbert's fifth problem by Gleason, Montgomery, Zippin, Yamabe \cite[Theorem, p.~175]{MontgomeryZippin55} implies that there is an extension
  \[
    1 \to K \to G \to Q \to 1
  \]
  where $K$ is compact and $Q$ is connected Lie. Applying Proposition~\ref{prop:at_space} to the proper action of $G$ on $Q$ we see that $G$ is $C_n$ if and only if $Q$ is $C_n$. Since $Q$ is $C_\infty$ by Proposition~\ref{prop:lie_grps}, so is $G$.
\end{proof}

The rest of the section is devoted to the proof of Proposition~\ref{prop:lie_grps}. It follows \cite{Tiemeyer94} but is carried out in terms of coarse connectedness, i.e.\ we show that $G$ is coarsely $n$-connected for all $n$ for the coarse structure $\calK$.

We use the following notation and results for simplicial complexes, see for example \cite{Spanier66} and \cite[§3]{Jardine04}. 
For $n \in \N$, set $\Delta^n = C(\{0, \dots, n\})$ and $\partial \Delta^n = \Delta^n \setminus \{\{0, \dots, n\}\}$.
If $X$ is an (abstract) simplicial complex we write $\abs{X}$ for its realization. Similarly, if $f \colon X \to Y$ is a simplicial map of simplicial complexes, we write $\abs{f} \colon \abs{X} \to \abs{Y}$ for its realization. We denote the barycentric subdivision of $X$ by $\Sd^n X$; its vertex set consists of barycenters $b_\sigma$ of simplices $\sigma$ of $X$. Iterating, we obtain the $n$-fold barycentric subdivision $\Sd^n X$. We canonically identify $\abs{\Sd^n X}$ and $\abs{X}$. In order to obtain a canonical simplicial map $\gamma_X \colon \Sd X \to X$ we pick a total order on the vertices of $X$ and put $\gamma_X(b_\sigma) = \min \sigma$. Note that for any simplex $\tau = \{b_{\sigma_1}, \dots, b_{\sigma_r}\}$ of $\Sd X$ corresponding to a flag $\sigma_1 < \dots < \sigma_r$ of simplices of $X$, we have $\gamma_X(\tau) \subseteq \sigma_r$.
Also note that, if $f\colon X \to Y$ is a simplicial map that is injective and order-preserving on vertices, then we have $f \circ \gamma_X = \gamma_Y \circ \Sd(f)$.
Once a total order on the vertices of $X$ is fixed, we order the vertices of $\Sd X$ by $b_\sigma < b_\tau$ iff $\dim \sigma < \dim \tau$ or [$\dim \sigma = \dim \tau$ and $\min (\sigma \setminus \tau) < \min (\tau \setminus \sigma)$]. This way we get a map $\gamma^n_X \colon \Sd^n X \to X$. We will usually write $\gamma$ (resp. $\gamma^n$) instead of $\gamma_X$ (resp. $\gamma_X^n$) when there is no risk of confusion. Note that $\abs{\gamma^n} \simeq \id \colon \abs{X} \to \abs{X}$. We say that two maps $f,g \colon X \to Y$ are contiguous if for every simplex $\sigma$ of $X$ the set $f(\sigma) \cup g(\sigma)$ is a simplex of $Y$. If $\sigma$ is a simplex of $X$, we denote by $\langle \sigma \rangle$ the subcomplex consisting of $\sigma$ and its faces.

The group $G$ admits a maximal compact subgroup $C$ and the quotient $X = G/C$ is a contractible smooth manifold \cite[Théorème 1]{Borel}, \cite[Theorems~14.1.3,~14.3.11]{HilgertNeeb}. Moreover $X$ carries a $G$-invariant Riemannian metric \cite[Example IV.1.3]{KobayashiNomizu63}. Note that $X$ is then geodesically complete \cite[Theorem IV.4.5]{KobayashiNomizu63}, hence proper as a metric space by the Hopf-Rinow theorem. 
It follows from the Švarc-Milnor lemma \cite[Theorem 4.C.5, Proposition 4.C.8 (2')]{CornulierDeLaHarpe16} that the orbit map $\pi: G \to X$ defined by $\pi(g) = gC$ for all $g \in G$ is a quasi-isometry, and in particular a coarse equivalence.
By \cite[Theorem~3.5]{Hausmann95}, there exists $\varepsilon > 0$ such that $\VR_r(X)$ is contractible for all $0 < r \le \varepsilon$. We fix such an $\varepsilon$ for the remainder of the proof and we put $\varepsilon_i = (1-2^{-i-1})\varepsilon$ for each $i\in \N$.

If $Y$ is a finite simplicial complex and $f,g \colon Y \to \VR_r(X)$ are simplicial maps, we put $d_\infty(f,g) = \sup \{d_X(f(v),g(v)) \mid v \in Y^{(0)}\}$ where the vertices $f(v),g(v)$ are regarded as points in $X$. With this setup we can formulate the key lemma: every sphere in $\VR_{\varepsilon_{n-1}}(X)$ can be filled by a subdivided disk and the degree of subdivision is uniform in a small neighborhood when the filling is taken in $\VR_{\varepsilon_{n}}(X)$.

\begin{lemma}\label{lem:filling_in_nbd}
  Let $m,n \in \N$. For every simplicial map $f \colon \Sd^k \partial \Delta^m \to \VR_{\varepsilon_{n-1}}(X)$ there exists an $\ell \ge k$ such that for all $g \colon  \Sd^k \partial \Delta^m \to \VR_{\varepsilon_{n-1}}(X)$ with $d_\infty(f,g) < 1/2(\varepsilon_n - \varepsilon_{n-1})$ there is a simplicial map $\hat{g} \colon \Sd^\ell \Delta^m \to \VR_{\varepsilon_n}(X)$ with $\hat{g}|_{\Sd^\ell \partial \Delta^m} = g \circ  \gamma^{\ell-k}$.
\end{lemma}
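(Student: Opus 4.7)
The strategy is to extend $f$ to a simplicial map $\hat f$ on the subdivided disk $\Sd^\ell \Delta^m$ landing in the smaller complex $\VR_{\varepsilon_{n-1}}(X)$, and then to define $\hat g$ by combining $\hat f$ in the interior with $g \circ \gamma^{\ell-k}$ on the boundary. The slack $\varepsilon_n - \varepsilon_{n-1}$ then absorbs any $\delta$-perturbation of the boundary data via the triangle inequality.

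For the extension of $f$: since $\varepsilon_{n-1} < \varepsilon$, Hausmann's theorem (recalled in the paragraph preceding the lemma) ensures that $\abs{\VR_{\varepsilon_{n-1}}(X)}$ is contractible, so $\abs{f}$ extends to a continuous map $F \colon \abs{\Delta^m} \to \abs{\VR_{\varepsilon_{n-1}}(X)}$. A relative form of the simplicial approximation theorem, arranged so that on $\partial \Delta^m$ the approximation is prescribed by the canonical $\gamma^{\ell-k}$, then produces $\ell \ge k$ and a simplicial map $\hat f \colon \Sd^\ell \Delta^m \to \VR_{\varepsilon_{n-1}}(X)$ with $\hat f|_{\Sd^\ell \partial \Delta^m} = f \circ \gamma^{\ell-k}$. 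Crucially $\ell$ depends only on $f$ (through the chosen $F$), not on any $g$ yet to be considered.

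Given $g$ with $d_\infty(f,g) < \delta := (\varepsilon_n - \varepsilon_{n-1})/2$, define $\hat g \colon \Sd^\ell \Delta^m \to \VR_{\varepsilon_n}(X)$ on vertices by
\[
  \hat g(v) = \begin{cases} g(\gamma^{\ell-k}(v)) & \text{if } v \in \Sd^\ell \partial \Delta^m, \\ \hat f(v) & \text{otherwise.} \end{cases}
\]
The required boundary condition $\hat g|_{\Sd^\ell \partial \Delta^m} = g \circ \gamma^{\ell-k}$ holds by construction. To check that $\hat g$ extends to a simplicial map into $\VR_{\varepsilon_n}(X)$, let $\tau$ be a simplex of $\Sd^\ell \Delta^m$ and split its vertex set into the boundary face $\tau_\partial := \tau \cap \Sd^\ell \partial \Delta^m$ and the remaining interior vertices $\tau_\circ$. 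Since $\gamma^{\ell-k}(\tau_\partial)$ is contained in a single simplex of $\Sd^k \partial \Delta^m$, we have $g(\gamma^{\ell-k}(\tau_\partial)) \in \VR_{\varepsilon_{n-1}}(X)$; likewise $\hat f(\tau_\circ) \in \VR_{\varepsilon_{n-1}}(X)$ since $\hat f$ is simplicial into $\VR_{\varepsilon_{n-1}}(X)$. For a mixed pair $(v, w)$ with $v \in \tau_\partial$ and $w \in \tau_\circ$, the identity $\hat f(v) = f(\gamma^{\ell-k}(v))$ from the extension step and the triangle inequality give
\[
  d(\hat g(v), \hat g(w)) \le d(g(\gamma^{\ell-k}(v)), f(\gamma^{\ell-k}(v))) + d(\hat f(v), \hat f(w)) < \delta + \varepsilon_{n-1} < \varepsilon_n,
\]
so $\hat g(\tau) \in \VR_{\varepsilon_n}(X)$ as required.

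The chief delicacy lies in the extension step, namely securing \emph{exact} agreement $\hat f|_{\Sd^\ell \partial \Delta^m} = f \circ \gamma^{\ell-k}$ rather than mere contiguity to it. The relative simplicial approximation theorem in the spirit of Zeeman handles this; alternatively one may take any simplicial approximation $\tilde f$ of $F$, observe that $\tilde f|_{\Sd^\ell \partial \Delta^m}$ and $f \circ \gamma^{\ell-k}$ are contiguous simplicial maps into $\VR_{\varepsilon_{n-1}}(X)$, insert a prism-triangulated collar realizing a simplicial homotopy between them, and further subdivide so that the resulting triangulation of $\Delta^m$ matches $\Sd^{\ell'} \Delta^m$ for some $\ell' \ge k$.
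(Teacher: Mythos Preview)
Your proof is correct and follows essentially the same approach as the paper: extend $f$ over the disk via contractibility of $\abs{\VR_{\varepsilon_{n-1}}(X)}$ and relative simplicial approximation (the paper cites Jardine's Theorem~3.1 for this), then define $\hat g$ exactly as you do and verify simpliciality into $\VR_{\varepsilon_n}(X)$ using the $\delta$-closeness of $f$ and $g$. The only cosmetic difference is that the paper checks simpliciality via a single contiguity observation (any vertex map taking values in $\{\hat f(v), g\circ\gamma^{\ell-k}(v)\}$ is simplicial into $\VR_{\varepsilon_n}$), whereas you split into boundary--boundary, interior--interior, and mixed pairs.
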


\begin{proof}
  Since $\abs{\VR_{\varepsilon_{n-1}}(X)}$ is contractible, there exists a map $\bar{f} \colon \abs{\Delta^m} \to \VR_{\varepsilon_{n-1}}(X)$ extending $\abs{f}$. By \cite[Theorem 3.1]{Jardine04} there exists an $\ell \ge k$ and a simplicial map $\hat{f} \colon \Sd^\ell \Delta^m \to \VR_{\varepsilon_{n-1}}(X)$ such that $\hat{f}|_{\Sd^\ell \partial \Delta^m} = f \circ \gamma^{\ell - k}$.

  Now if any two maps $f,g \colon Y \to \VR_{\varepsilon_{n-1}}(X)$ satisfy $d_\infty(f,g) < \frac{1}{2}(\varepsilon_n - \varepsilon_{n-1})$ then they are contiguous as maps $Y \to \VR_{\varepsilon_n}(X)$. Thus a map $h \colon Y^{(0)} \to \VR_{\varepsilon_{n-1}}$ satisfying $h(v) \in \{f(v),g(v)\}$ is simplicial.

  So if $g$ is as in the lemma, defining $\hat{g}$ by
  \[
    \hat{g}(v) = \begin{cases}
      g \circ \gamma^{\ell - k}(v) & \text{if }v \in \Sd^\ell \partial \Delta^m\\
      \hat{f}(v) & \text{else},
    \end{cases}
  \]
  gives a simplicial map as required.
\end{proof}

The lemma will be combined with the following compactness statement. 

\begin{lemma}\label{lem:compactness}
  Let $(Y,y)$ be a pointed, finite connected simplicial complex, and let $x \in X$. Then the set $M$ of pointed simplicial maps $f \colon (Y,y) \to (\VR_{\varepsilon_n},x)$ is compact, regarded as a subset of the space of maps $Y^{(0)} \to X$ (a finite power of $X$).
\end{lemma}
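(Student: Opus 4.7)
The plan is to argue that $M$ sits inside a finite product of closed balls in $X$ and is cut out by closed conditions, then invoke properness of $X$.

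First, since $Y$ is a finite connected simplicial complex, every vertex $v \in Y^{(0)}$ can be joined to the basepoint $y$ by an edge path of combinatorial length at most some $N = N(Y)$. For any $f \in M$ and any edge $\{v, w\}$ of $Y$, the image $\{f(v), f(w)\}$ is a simplex of $\VR_{\varepsilon_n}(X)$, so $d_X(f(v), f(w)) \le \varepsilon_n$. Iterating along an edge path from $y$ to $v$ and using $f(y) = x$, I obtain $d_X(f(v), x) \le N\varepsilon_n$ for every $v \in Y^{(0)}$. Since $X$ is proper (as already noted in the text, via Hopf--Rinow applied to the $G$-invariant Riemannian metric), the closed ball $B = \bar B(x, N\varepsilon_n) \subseteq X$ is compact.

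Therefore $M$ is contained in the compact space $B^{Y^{(0)}}$, viewed as a subspace of $X^{Y^{(0)}}$. It remains to show that $M$ is closed in $X^{Y^{(0)}}$. The defining conditions are: (i) $f(y) = x$, which is a closed condition since $X$ is Hausdorff; and (ii) $f$ is simplicial into $\VR_{\varepsilon_n}(X)$, which for the Vietoris--Rips complex reduces to the pairwise conditions $d_X(f(v), f(w)) \le \varepsilon_n$ for every edge $\{v,w\}$ of $Y$. Each such condition is closed because $d_X$ is continuous, and there are only finitely many edges in $Y$. Hence $M$ is a finite intersection of closed subsets of $X^{Y^{(0)}}$, intersected with the compact box $B^{Y^{(0)}}$, and is thus compact.

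There is no real obstacle here: the argument hinges entirely on properness of $X$ (a metric feature inherited from the Riemannian structure on $G/C$) and on the fact that Vietoris--Rips simpliciality is detected on $1$-simplices. The finiteness and connectedness of $Y$ are exactly what is needed to turn the local bound $\varepsilon_n$ into a global bound $N\varepsilon_n$ that anchors every map in $M$ inside a single compact ball.
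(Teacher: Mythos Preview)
Your proof is correct and follows essentially the same approach as the paper's: bound the image of every vertex within distance $N\varepsilon_n$ of $x$ using connectedness and finiteness of $Y$, invoke properness of $X$ to get containment in a compact box, and then observe that the pointedness and simpliciality conditions are closed (the latter reducing to edge conditions because $\VR_{\varepsilon_n}(X)$ is a flag complex). The paper's proof is slightly terser, writing the combinatorial diameter as $d$ and leaving the appeal to properness implicit, but the argument is the same.
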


\begin{proof}
  Since $Y$ is finite, it has finite diameter, say $d$. Thus, because $f$ is simplicial and $Y$ is connected, for every vertex $y' \in Y$, we have $d(f(y),f(y')) \le d \varepsilon_n$, showing that $M$ is bounded. It is closed because the conditions to be simplicial ($d_X(f(y_1),f(y_2)) \le \varepsilon_n$ for every edge $\{y_1,y_2\}$ of $Y$) and pointed ($f(y) = x$) are closed.
\end{proof}

Combining the two lemmas and using the homogeneity of $X$ we get that a $(m-1)$-sphere with $m \ge 2$ in $\VR_{\varepsilon_{n-1}}(X)$ can be filled by a subdivided disk in $\VR_{\varepsilon_n}(X)$ whose degree of subdivision is uniformly bounded.

\begin{lemma}\label{lem:filling}
    Let $m \in \N_{\ge 2}$, $n \in \N$. For every $k\in \N$, there exists $\ell \ge k$ such that, for every simplicial map $f \colon \Sd^k \partial \Delta^m \to \VR_{\varepsilon_{n-1}}(X)$, there is a map $\hat{f} \colon \Sd^\ell \Delta^m \to \VR_{\varepsilon_n}(X)$ with $\hat{f}|_{\Sd^\ell \partial \Delta^m} = f \circ  \gamma^{\ell-k}$. 
\end{lemma}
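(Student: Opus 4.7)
The plan is to upgrade the pointwise filling statement of Lemma~\ref{lem:filling_in_nbd} to a uniform one by a standard compactness argument. Lemma~\ref{lem:filling_in_nbd} already gives us, for each fixed $f$, an $\ell_f$ that works not just for $f$ but for every $g$ within $d_\infty$-distance $\tfrac{1}{2}(\varepsilon_n - \varepsilon_{n-1})$ of $f$. Hence if the space of admissible $f$ were compact in the $d_\infty$-topology, finitely many such balls would cover it and the maximum of the corresponding $\ell_{f_i}$ would do the job, after pre-composing with an iterate of $\gamma$ to bring everyone to the same subdivision level.

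The obstacle is that the space of all simplicial maps $\Sd^k \partial \Delta^m \to \VR_{\varepsilon_{n-1}}(X)$ is not compact, because $X$ is not. I would remove this obstacle using the homogeneity of $X$: the group $G$ acts transitively on $X$ by isometries, so it acts by simplicial automorphisms on $\VR_r(X)$ for every $r$, and this action preserves $d_\infty$. Fix once and for all a vertex $y_0$ of $\Sd^k \partial \Delta^m$ and a point $x_0 \in X$. Then every $f$ can be translated by a suitable $h \in G$ to a map $h \cdot f$ sending $y_0$ to $x_0$. Since $\Sd^k \partial \Delta^m$ is finite and, for $m \ge 2$, connected, the set $M$ of pointed simplicial maps $(\Sd^k \partial \Delta^m, y_0) \to (\VR_{\varepsilon_{n-1}}(X), x_0)$ is compact by Lemma~\ref{lem:compactness} (whose proof applies verbatim with $\varepsilon_{n-1}$ in place of $\varepsilon_n$, or one notes that $M$ is closed in the compact set of maps into $\VR_{\varepsilon_n}(X)$).

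With this in hand I would cover $M$ by the open $d_\infty$-balls of radius $\tfrac{1}{2}(\varepsilon_n - \varepsilon_{n-1})$ supplied by Lemma~\ref{lem:filling_in_nbd}, extract a finite subcover $B(f_1),\ldots,B(f_r)$, and set $\ell := \max_i \ell_{f_i}$. Given an arbitrary $f \colon \Sd^k \partial \Delta^m \to \VR_{\varepsilon_{n-1}}(X)$, I would first translate by some $h \in G$ so that $h \cdot f \in M$, pick $i$ with $h \cdot f \in B(f_i)$, apply Lemma~\ref{lem:filling_in_nbd} to obtain $\widehat{h \cdot f} \colon \Sd^{\ell_{f_i}} \Delta^m \to \VR_{\varepsilon_n}(X)$ extending $(h \cdot f) \circ \gamma^{\ell_{f_i} - k}$, pre-compose with $\gamma^{\ell - \ell_{f_i}}$ to bring the domain up to $\Sd^\ell \Delta^m$, and finally translate back by $h^{-1}$ (which lands in $\VR_{\varepsilon_n}(X)$ since $G$ acts there by simplicial automorphisms). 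The boundary identity follows from the concatenation relation $\gamma^{\ell_{f_i} - k} \circ \gamma^{\ell - \ell_{f_i}} = \gamma^{\ell - k}$.

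I do not expect a deep obstacle here: the two preceding lemmas are tailored to this situation, and the only real conceptual move is the homogeneity reduction that allows compactness to be used. The finicky point is bookkeeping — verifying that $G$ acts by simplicial automorphisms on each Vietoris--Rips complex, that $d_\infty$ is $G$-invariant, and that the $\gamma$-precomposition composes correctly on the boundary — but none of this requires new ideas.
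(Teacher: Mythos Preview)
Your proposal is correct and follows essentially the same argument as the paper: both use Lemma~\ref{lem:compactness} (with $m\ge 2$ ensuring connectedness of $\Sd^k\partial\Delta^m$) to reduce to finitely many balls of radius $\tfrac12(\varepsilon_n-\varepsilon_{n-1})$, apply Lemma~\ref{lem:filling_in_nbd} to each center, take $\ell=\max_i\ell_{f_i}$, and then use the transitive isometric $G$-action on $X$ to pass from pointed maps back to all maps. Your write-up is in fact a touch more explicit about the bookkeeping (the $\gamma$-precomposition to equalize subdivision levels and the translation back by $h^{-1}$), which the paper handles in one sentence.
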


\begin{proof}
    Fix a base vertex $v \in \partial \Delta^m$ and a point $x \in X$. Then, because $\partial \Delta^m$ is connected, it follows from Lemma \ref{lem:compactness} that the set $M$ of pointed simplicial maps $f: (\Sd^k(\partial \Delta^m), v) \to (\VR_{\varepsilon_{n-1}}(X), x)$ is compact. Consequently, there exist maps $f_1, \dots, f_r \in M$ such that $$M \subset \bigcup_{i=1}^r \underbrace{M \cap B_{d_\infty}\left(f_i, \frac{1}{2}(\varepsilon_n - \varepsilon_{n-1})\right)}_{B_i}.$$
    By Lemma \ref{lem:filling_in_nbd}, for each $i \in \{1,\ldots,r\}$, there exists $\ell_i \ge k$ such that, for each $g \in B_i$, there exists $\hat g: \Sd^{\ell_i} \Delta^m \to \VR_{\varepsilon_n}(X)$ such that $\hat g|_{\Sd^{\ell_i}(\partial \Delta^m)} = g \circ \gamma^{\ell_i - k}$.
     It follows that, for each  $g \in M$, there exists $\hat g: \Sd^{\ell} \Delta^m \to \VR_{\varepsilon_n}(X)$ such that $\hat g|_{\Sd^{\ell}(\partial \Delta^m)} = g \circ \gamma^{\ell- k}$, where $\ell = \max(\ell_1, \dots, \ell_r)$. Finally, using the fact that any simplicial map $f \colon \Sd^k \partial \Delta^m \to \VR_{\varepsilon_{n-1}}(X)$ is a $G$-translate of an element of $M$, we get that the same conclusion holds for all simplicial maps from $\Sd^k \partial \Delta^m$ to $\VR_{\varepsilon_{n-1}}(X)$.
\end{proof}

Having made these preliminary observations, we turn to our main problem, namely that of proving that $G$ is of type $C_n$ for all $n \in\N$. By Corollary \ref{cor:cn_coarse_connected}, this amounts to proving that $G$ is coarsely $n$-connected for all $n \in \N$ or, equivalently by a previous observation, that $X$ is coarsely $n$-connected for all $n \in \N$, which is what we establish below. 
\begin{proof}[Proof of Proposition~\ref{prop:lie_grps}]
Let $n \in \N$ and $R > 0$. Our goal is to prove that there exists $S \ge R$ such that the inclusion $\VR_R(X) \to \VR_S(X)$ induces a trivial map between homotopy groups of degree at most $n-1$. If $R \le \varepsilon$ we can take $S=R$ so assume $R > \varepsilon$ from now on. For this purpose, we construct a simplicial map $$\phi: \Sd^N(\VR_R(X)^{(n)}) \to \VR_\varepsilon(X)$$ whose restriction to the vertices of $\VR_R(X)$ is the identity and which fits inside a square 
     \begin{center}
         \begin{tikzcd}
             \Sd^N(\VR_R(X)^{(n)}) \arrow[r, "\phi"] \arrow[d, "\gamma^N"] & \VR_\varepsilon(X) \arrow[d, "\subset"] \\
             \VR_R(X)^{(n)} \arrow[r, "\subset"] & \VR_S(X)
         \end{tikzcd},
     \end{center}
     that commutes up to homotopy after passing to geometric realizations. 

The construction of the map is inductive on skeleta. More precisely, we prove that, for each $i \in \{1,\ldots,n\}$, there exists $N_i \in \N$, and a simplicial map $\phi_i \colon \Sd^{N_i}(\VR_R(X)^{(i)}) \to \VR_{\varepsilon_i}(X)$ extending  $\phi_{i-1} \circ \gamma^{N_i - N_{i-1}}$ where  $$\phi_0: \VR_R(X)^{(0)} = X \overset{\mathrm{id}}{\longrightarrow} X = \VR_{\varepsilon_0}(X)^{(0)}$$ is the identity.

In order to define $\phi_1$ we consider the quotient map $\pi: G \to G/C = X$ and set $\bar 1_G = \pi(1_G)$. Note that $\pi$ is proper as $C$ is compact. Then, the preimage $V = \pi^{-1}(\bar{B_X}(\bar 1_G, \varepsilon_0))$ is a symmetric neighborhood of $1_G$. Because $G$ is connected, the neighborhood $V$ generates $G$.
    On the other hand, the preimage $K = \pi^{-1}(\bar{B_X}(\bar 1_G, R))$ is compact, and is therefore contained in $V^e$ for some $e \ge 1$.
    It follows that any two points $x$ and $y$ in $X$ with $d_X(x,y) \le R$ may be joined in $\VR_{\varepsilon_0}(X)$ by a path of length at most $e$. Letting $N_1$ be such that $2^{N_1} \ge e$, we can thus construct a simplicial map $\phi_1 \colon \Sd^{N_1}(\VR_R(X)^{(1)}) \to \VR_{\varepsilon_0}(X)$ extending the identity on vertices of $\VR_R(X)$, as the $N_1$-fold barycentric subdivision of an edge of $\VR_R(X)$ is merely a path graph of length $2^{N_1}$ with extremities in $\VR_R(X)$.

Let $i \in \{2,\ldots,n-1\}$ and assume that we have constructed a simplicial map $$\phi_{i-1}: \Sd^{N_{i-1}}(\VR_R(X)^{(i-1)}) \to \VR_{\varepsilon_{i-1}}(X).$$
    By Lemma \ref{lem:filling}, there exists $N_{i} \ge N_{i-1}$ such that, for every simplicial map $f \colon \Sd^{N_{i-1}}(\partial \Delta^{i}) \to \VR_{\varepsilon_{i-1}}(X)$, there is a simplicial map $\hat{f} \colon \Sd^{N_{i}}(\Delta^{i}) \to \VR_{\varepsilon_{i}}(X)$ with $\hat{f}|_{\Sd^{N_{i}}(\partial \Delta^{i})} = f \circ  \gamma^{N_{i} - N_{i-1}}_{\Sd^{N_{i-1}}(\partial \Delta^i)}$. Let $\sigma$ be $i$-simplex of $\VR_R(X)$. There exists a unique simplicial isomorphism $\theta \colon \Delta^i \to \langle \sigma \rangle$ that is order-preserving on vertices. Then $f_{\sigma} = \phi_{i-1} \circ \left(\Sd^{N_{i-1}}(\theta)|_{\Sd^{N_{i-1}}(\partial \Delta^i)}\right)$ is a simplicial map from  $\Sd^{N_{i-1}}(\partial \Delta^{i})$ to $VR_{\varepsilon_{i-1}}(X)$. Let $\hat f_{\sigma}\colon \Sd^{N_{i}}(\Delta^{i}) \to \VR_{\varepsilon_{i}}(X)$ be such that  $\hat{f_{\sigma}}|_{\Sd^{N_{i}}(\partial \Delta^{i})} = f_{\sigma} \circ   \gamma^{N_{i} - N_{i-1}}_{\Sd^{N_{i-1}}(\partial \Delta^i)}$. Finally, set $\phi_{i, \sigma} = \hat f_{\sigma} \circ \Sd^{N_i}(\theta^{-1})$. Then, the map $\phi_{i, \sigma}$ is simplicial from $\Sd^{N_i}(\langle \sigma \rangle)$ to $\VR_{\varepsilon_i}(X)$ and we have 
    $$\begin{array}{ccl}
        \phi_{i, \sigma}|_{\Sd^{N_i}(\langle \sigma \rangle \setminus \{\sigma\})} & = & \hat f_{\sigma}|_{\Sd^{N_i}(\partial \Delta^i)} \circ \Sd^{N_i}(\theta^{-1})|_{\Sd^{N_i}(\langle \sigma \rangle \setminus \{\sigma\})}\\
        & = & f_{\sigma} \circ \gamma^{N_i - N_{i-1}}_{\Sd^{N_{i-1}}(\partial \Delta^i)} \circ \Sd^{N_i}(\theta^{-1})|_{\Sd^{N_i}(\langle \sigma \rangle \setminus \{\sigma\})} \\
        & = & f_{\sigma} \circ \Sd^{N_{i-1}}(\theta^{-1})|_{\Sd^{N_{i-1}}(\langle \sigma \rangle \setminus \{\sigma\})} \circ \gamma^{N_i-N_{i-1}}_{\Sd^{N_{i-1}}(\langle \sigma \rangle \setminus \{\sigma\})} \\
        & = & \phi_{i-1} \circ \gamma^{N_i-N_{i-1}}_{\Sd^{N_{i-1}}(\langle \sigma \rangle \setminus \{\sigma\})}
    \end{array}.$$
    In particular, the maps $\phi_{i,\sigma}$ glue together to a well-defined map $$\phi_i: \Sd^{N_i}(\VR_R(X)^{(i)}) \to \VR_{\varepsilon_i}(X)$$ such that $\phi_i|_{\Sd^{N_i}(\VR_R(X)^{(i-1)})} = \phi_{i-1} \circ \gamma^{N_i - N_{i-1}}$. In particular, like $\phi_{i-1}$, the map $\phi_i$ induces the identity on vertices of $\VR_R(X)$, which completes the induction.

We have thus constructed a simplicial map $$\phi: \Sd^N(\VR_R(X)^{(n)}) \to \VR_\varepsilon(X)$$ extending the identity map on vertices of $\VR_R(X)$.

Now we put $S = R+d\varepsilon$ where $d$ is the diameter of $\Sd^N(\Delta^{n})$ and claim that the following diagram induces a homotopy commutative square when passing to geometric realizations
     \begin{center}
         \begin{tikzcd}
             \Sd^N(\VR_R(X)^{(n)}) \arrow[r, "\phi"] \arrow[d, "\gamma^N"] & \VR_\varepsilon(X) \arrow[d, "\subset"] \\
             \VR_R(X)^{(n)} \arrow[r, "\subset"] & \VR_S(X)
         \end{tikzcd}.
     \end{center}
     It suffices to check that $\phi$ and $\gamma^N$ are contiguous as maps to $\VR_S(X)$. Note that, if $\sigma$ is a simplex of $\Sd^N(\VR_R(X)^{(n)})$, then it lies in $\Sd^N(\langle \tau\rangle)$ for a simplex $\tau$ of $\VR_R(X)^{(n)}$. By definition of $\gamma^N$, we must have $\gamma^N(\sigma) \subseteq \tau$. 
     On the other hand, note that any two vertices of $\Sd^N(\tau)$ are connected by a path of length at most $d$. In particular, any vertex of $\tau$ is connected to any vertex of $\sigma$ by a path of length at most $d$ in $\Sd^N(\VR_R(X)^{(n)})$. As $\phi$ is simplicial and fixes the vertices of $\VR_R(X)^{(n)}$ (and in particular, the vertices of $\tau$), it follows that any vertex of $\phi(\sigma)$ is connected to any vertex of $\tau$ by a path of length at most $d$ in $\VR_\varepsilon(X)$. In particular, for any $x \in \gamma^N(\sigma)$ and $y \in \phi(\sigma)$, we have $d_X(x,y) \le d\varepsilon$. It follows that $\phi(\sigma) \cup \gamma^N(\sigma)$ is a simplex in $\VR_S(X)$, which completes the proof. 

     As $|\gamma^N|$ is a homotopy equivalence, it follows that the inclusion $$|\VR_R(X)^{(n)}| \to |\VR_S(X)|$$ factors through the contractible space $|\VR_\varepsilon(X)|$ up to homotopy, and is thus homotopically trivial. Consequently, $\pi_i\big(|\VR_R(X)| \to |\VR_S(X)|\big)$ is trivial for $0 \le i \le n-1$. 
\end{proof}

\section{Proof}

To prove Theorem~\ref{thm:main} it remains to treat the tdlc case and deal with extensions.

\begin{proposition}\label{prop:tdlc}
  For a locally compact $G$ the following conditions are equivalent:
  \begin{enumerate}
    \item $G$ is of type $C_n$ in the sense of Abels--Tiemeyer.\label{item:tdlc_cn}
    \item $G$ is coarsely $(n-1)$-connected for $\calK$.\label{item:tdlc_cc}
  \end{enumerate}
  If $G$ is totally disconnected they are also equivalent to:
  \begin{enumerate}
    \setcounter{enumi}{2}
    \item $G$ is of type $F_n$ in the sense of Castellano--Corob-Cook.\label{item:tdlc_fn}
  \end{enumerate}
\end{proposition}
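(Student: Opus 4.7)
The equivalence of (\ref{item:tdlc_cn}) and (\ref{item:tdlc_cc}) is precisely Corollary~\ref{cor:cn_coarse_connected}, valid for any locally compact group, so the only new work is $(\ref{item:tdlc_cc}) \Leftrightarrow (\ref{item:tdlc_fn})$ in the totally disconnected case. My plan is to realize the coarsely defined filtration of $G$ as a filtration of a contractible discrete $G$-CW complex of Castellano--Corob-Cook type, and to then pass between the two conditions using Brown's criterion.

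First I would reduce to the case where $G$ is compactly generated: for $n = 0$ the statement is trivial, while for $n \ge 1$ each of the three conditions implies compact generation (for (\ref{item:tdlc_cc}) because the connectedness of some $\abs{\VR_{U_C}(G)}$ forces $G = \bigcup_k C^k$), and in particular $\sigma$-compactness. By van Dantzig's theorem I fix a compact open subgroup $K \le G$ and push $\calK$ forward to a coarse structure on $G/K$ along $\pi \colon G \to G/K$. Choosing coset representatives yields a section $s \colon G/K \to G$ with $(g, s(\pi(g))) \in U_K \in \calK$ for every $g$, so $\pi$ and $s$ are mutually inverse coarse equivalences. By the coarse-retract proposition of Section~\ref{sec:coarse_connectivity} combined with Proposition~\ref{prop:equivalent_filtrations}, the coarse $(n-1)$-connectedness of $(G, \calK)$ is equivalent to the essential $(n-1)$-connectedness of the directed system $(\abs{\VR_U(G/K)})_{U}$.

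Next I would exhibit this filtration inside a contractible proper discrete $G$-CW complex. Since $G$ is $\sigma$-compact and $K$ is open, $G/K$ is countable, so the full simplex $\scx(G/K)$ on vertex set $G/K$ is a contractible discrete $G$-CW complex whose $k$-simplex stabilizers are intersections of $k+1$ conjugates of $K$, hence compact open. A compact tdlc group is itself of type $F_\infty$ in the sense of \cite{CastellanoCorobCook20} (it acts cocompactly on a point with itself as open stabilizer), so the stabilizer hypothesis of Theorem~\ref{thm:cc_brown} is automatic. I would then pick a cofinal sequence $C_0 \subseteq C_1 \subseteq \cdots$ of compact subsets exhausting $G$, set $X_k = \VR_{U_{C_k}}(G/K)$, and verify that each $X_k^{(n)}$ is $G$-cocompact by noting that $K C_k K$ is compact and meets only finitely many $K$-double cosets, so the simplices of $X_k$ fall into finitely many $G$-orbits.

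With this data Brown's criterion (Theorem~\ref{thm:cc_brown}) applied to $\scx(G/K)$ with the filtration $(X_k)_k$ yields: $G$ is of type $F_n$ in the sense of Castellano--Corob-Cook if and only if $(\pi_i(X_k))_k$ is essentially trivial for $i < n$, which is precisely the coarse $(n-1)$-connectedness condition established above. The main obstacle I anticipate is the bookkeeping around the pushforward coarse structure on $G/K$, specifically checking that it is generated by the relations $\{(gK, hK) \mid g^{-1}h \in KCK\}$ for compact $C \subseteq G$ and that the directed system indexed by $\calK$ may be replaced without loss by the cofinal sequence $(C_k)_k$ demanded by Brown's criterion; both issues dissolve once $\sigma$-compactness has been secured.
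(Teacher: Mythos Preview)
Your argument is correct, and like the paper you ultimately invoke Brown's criterion (Theorem~\ref{thm:cc_brown}) on a complex built from $G/K$ with $K$ compact open, but the route is genuinely different. The paper proves $(\ref{item:tdlc_cn}) \Leftrightarrow (\ref{item:tdlc_fn})$ directly rather than $(\ref{item:tdlc_cc}) \Leftrightarrow (\ref{item:tdlc_fn})$: it applies Proposition~\ref{prop:at_space} to the proper $G$-space $T = G/C$, which identifies the $C_n$ condition with essential triviality of $(\pi_i(\abs{G\cdot EF}))_F$ on the free simplicial set $\abs{ET}$, and then applies Theorem~\ref{thm:cc_brown} to that very same complex and filtration. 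The two criteria thus match on the nose, and no further translation is needed. Your approach instead transports the coarse structure to $G/K$ via the retract proposition of Section~\ref{sec:coarse_connectivity}, works with the simplicial complex $\scx(G/K)$ and a Vietoris--Rips filtration, and appeals to Proposition~\ref{prop:equivalent_filtrations} to reconcile the variants; this is perfectly legitimate but costs you the $\sigma$-compactness reduction and the pushforward bookkeeping, neither of which the paper needs. The trade-off is that your argument is more self-contained within the coarse framework, whereas the paper's is shorter because Proposition~\ref{prop:at_space} absorbs the work.
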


\begin{proof}
  The equivalence between \eqref{item:tdlc_cn} and \eqref{item:tdlc_cc} is Corollary~\ref{cor:cn_coarse_connected}

   Now assume that $G$ is tdlc and let $C < G$ be a compact open subgroup (which exists by van Dantzig's theorem). Then $T = G/C$ is discrete and $G$ acts on $T$ properly and cocompactly. Let $X = |ET|$ be the realization of the free simplicial set and, for $F \subseteq T$ finite, let $X_F = |G \cdot EF|$. Then $(X_F)_F$ is a filtration of $X$ by $G$-cocompact subcomplexes. By Proposition~\ref{prop:at_space} $G$ is of type $C_n$ if and only if $(\pi_i(X_F))_F$ is essentially trivial for $i < n$. By Theorem~\ref{thm:cc_brown} $G$ is of type $F_n$ if and only if the same is true. This shows \eqref{item:tdlc_cn} $\Leftrightarrow$ \eqref{item:tdlc_fn}.
\end{proof}

The following result due to Bux, Hartmann, and Quintanilha shows that compactness properties $C_n$ behave under extensions like finiteness properties of discrete groups. It is the special case $\chi = 0$ of \cite[Theorem~H]{BuxHartmannQuintanilha}:

\begin{theorem}\label{thm:extension}
  Let $1 \to N \to G \to Q \to 1$ be an extension of locally compact groups and let $N$ be of type $C_n$.
  \begin{enumerate}
    \item If $G$ is of type $C_{n+1}$ then $Q$ is of type $C_{n+1}$.
    \item If $Q$ is of type $C_n$ then $G$ is of type $C_n$.
  \end{enumerate}
\end{theorem}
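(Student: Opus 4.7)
The plan is to work in the Abels--Tiemeyer framework via Proposition~\ref{prop:at_space}, constructing a proper $G$-space that geometrically exhibits the extension and then transferring essential connectedness between the three associated filtrations through a fibration-type argument.

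First I would choose proper actions of $N$ on a locally compact space $T_N$ and of $Q$ on a locally compact space $T_Q$. Forming the twisted product $S = G \times_N T_N$ (quotient by $n \cdot (g, t) = (gn^{-1}, nt)$) yields a proper $G$-space which fibers $G$-equivariantly over $Q = G/N$ with fiber $T_N$. Pulling $T_Q$ back along $G \to Q$ and taking the product gives a proper $G$-space $T = S \times T_Q$ whose projection to $T_Q$ is $G$-equivariant with fiber $S$. Every compact $K \subseteq T$ can be enlarged cofinally to a product $L \times K_Q$ with $L \subseteq S$ compact (having compact ``vertical part'' $K_N \subseteq T_N$) and $K_Q \subseteq T_Q$ compact; the associated filtration piece $|G \cdot EK|$ then maps to $|Q \cdot EK_Q|$ with ``fibers'' that, up to coherent homotopy, are copies of $|N \cdot EK_N|$.

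For part~(2), assume $N$ and $Q$ are both of type $C_n$. Given a compact $K$, hence $K_N$ and $K_Q$ as above, $N$-connectedness lets us enlarge $K_N$ to kill $\pi_i$ of the fiber for $i < n$, and $Q$-connectedness similarly handles the base. A Serre-type long exact sequence applied to the resulting fibration-up-to-homotopy then kills $\pi_i(|G \cdot EK|)$ for $i < n$, so Proposition~\ref{prop:at_space} yields that $G$ is of type $C_n$. Part~(1) is dual: assuming $G$ is of type $C_{n+1}$ and $N$ is of type $C_n$, a class $\alpha \in \pi_i(|Q \cdot EK_Q|)$ with $i \le n$ has connecting image in $\pi_{i-1}(|N \cdot EK_N|)$, which lies in the range $i-1 \le n-1$ killed by enlarging $K_N$; once $\alpha$ lifts to $\pi_i(|G \cdot EK'|)$ it can be killed by enlarging $K'$ since $G$ is of type $C_{n+1}$. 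Reprojecting then exhibits essential $n$-connectedness for $Q$.

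The principal obstacle is making the fibration-style argument rigorous at the level of directed systems: the map $|G \cdot EK| \to |Q \cdot EK_Q|$ is not literally a Serre fibration, the fibers are only defined up to coherent homotopy as $K$ varies, and one needs compatible cofinal choices to run the long exact sequence consistently across the directed system of filtration pieces, keeping track of the essential (rather than absolute) vanishing at each stage. The approach of \cite{BuxHartmannQuintanilha} bypasses these subtleties through a more combinatorial discrete Morse-theoretic analysis on an appropriate $G$-space, which manages the equivariance and the cofinality requirements uniformly; their setup also accommodates the additional Euler characteristic parameter $\chi$ that we set to zero here.
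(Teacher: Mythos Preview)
The paper does not give its own proof of this theorem: it simply records that the statement is the special case $\chi = 0$ of \cite[Theorem~H]{BuxHartmannQuintanilha} and invokes that result as a black box. So there is no in-paper argument to compare your sketch against.

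Your outline is the natural fibration-style strategy one would try by analogy with the discrete case, and you are candid about its weak point: the map $|G \cdot EK| \to |Q \cdot EK_Q|$ is not an honest Serre fibration, and running a long exact sequence \emph{essentially} (i.e.\ after passing to a larger $K$ at each step, compatibly for fiber, total space, and base) requires a careful cofinality bookkeeping that your sketch does not supply. This is a genuine gap, not a mere technicality---the whole content of the theorem lives in controlling these directed systems. You yourself note that \cite{BuxHartmannQuintanilha} circumvents this via a combinatorial Morse-theoretic setup rather than a homotopy long exact sequence; that is indeed the substantive difference, and it is why the paper defers to their argument rather than attempting the fibration route.
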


\begin{proof}[Proof of Theorem~\ref{thm:main}]
  The equivalence of \eqref{item:cn} and \eqref{item:cc} was shown in Proposition~\ref{prop:tdlc}. By Theorem~\ref{thm:connected} $G^0$ is of type $C_\infty$ so by Theorem~\ref{thm:extension} $G$ is of type $C_n$ if and only if $G/G^0$ is (proving Corollary~\ref{cor:main} for property $C_n$). Now Proposition~\ref{prop:tdlc} applied to $G/G^0$ completes the proof.
\end{proof}

\begin{remark}
  If $R$ is a ring, the same reasoning (using the appropriate results in \cite{CastellanoCorobCook20} and \cite{AbelsTiemeyer97}) shows that for tdlc groups $\mathit{CP}_n(R)$ (which Abels--Tiemeyer only define for $R = \Z$ but generalizes in an obvious way) is equivalent to $\mathit{FP}_n(R)$ and to being coarsely $(n-1)$-acyclic for $\calK$. Since $\mathit{C}_\infty$ implies  $\mathit{CP}_\infty(R)$, connected groups enjoy that property as well. What is not (yet) available is a cohomological version of Theorem~\ref{thm:extension}.
\end{remark}

\bibliographystyle{amsalpha}
\bibliography{lc_top_fin}

\providecommand{\bysame}{\leavevmode\hbox to3em{\hrulefill}\thinspace}
\providecommand{\MR}{\relax\ifhmode\unskip\space\fi MR }
\providecommand{\MRhref}[2]{%
  \href{http://www.ams.org/mathscinet-getitem?mr=#1}{#2}
}
\providecommand{\href}[2]{#2}
\begin{thebibliography}{CdlH16}

\bibitem[ACV]{AntolinVillareal}
Omar Antol\'in-Camarena and Bernardo Villareal, \emph{The complex of affinely commutative sets}.

\bibitem[Alo94]{Alonso94}
Juan~M. Alonso, \emph{Finiteness conditions on groups and quasi-isometries}, J. Pure Appl. Algebra \textbf{95} (1994), no.~2, 121--129. \MR{1293049}

\bibitem[AT97]{AbelsTiemeyer97}
H.~Abels and A.~Tiemeyer, \emph{Compactness properties of locally compact groups}, Transform. Groups \textbf{2} (1997), no.~2, 119--135. \MR{1451359}

\bibitem[BHQ]{BuxHartmannQuintanilha}
Kai-Uwe Bux, Elisa Hartmann, and José~Pedro Quintanilha, \emph{Topological sigma-invariants: the homotopical perspective}.

\bibitem[Bor52]{Borel}
Armand Borel, \emph{Sous-groupes compacts maximaux des groupes de {Lie}}, S\'eminaire Bourbaki : ann\'ees 1948/49 - 1949/50 - 1950/51, expos\'es 1-49, S\'eminaire Bourbaki, no.~1, Soci\'et\'e math\'ematique de France, 1952, talk:33, pp.~271--279 (fr). \MR{1605197}

\bibitem[Bou71]{BourbakiTG15}
N.~Bourbaki, \emph{\'{E}l\'ements de math\'ematique. {T}opologie g\'en\'erale. {C}hapitres 1 \`a{} 4}, Hermann, Paris, 1971. \MR{358652}

\bibitem[CCC20]{CastellanoCorobCook20}
I.~Castellano and G.~Corob~Cook, \emph{Finiteness properties of totally disconnected locally compact groups}, J. Algebra \textbf{543} (2020), 54--97. \MR{4019790}

\bibitem[CdlH16]{CornulierDeLaHarpe16}
Yves Cornulier and Pierre de~la Harpe, \emph{Metric geometry of locally compact groups}, EMS Tracts in Mathematics, vol.~25, European Mathematical Society (EMS), Z\"urich, 2016, Winner of the 2016 EMS Monograph Award. \MR{3561300}

\bibitem[Hau95]{Hausmann95}
Jean-Claude Hausmann, \emph{On the {V}ietoris-{R}ips complexes and a cohomology theory for metric spaces}, Prospects in topology ({P}rinceton, {NJ}, 1994), Ann. of Math. Stud., vol. 138, Princeton Univ. Press, Princeton, NJ, 1995, pp.~175--188. \MR{1368659}

\bibitem[HN12]{HilgertNeeb}
Joachim Hilgert and Karl-Hermann Neeb, \emph{Structure and geometry of {L}ie groups}, Springer Monographs in Mathematics, Springer, New York, 2012. \MR{3025417}

\bibitem[Jar04]{Jardine04}
J.~F. Jardine, \emph{Simplicial approximation}, Theory Appl. Categ. \textbf{12} (2004), No. 2, 34--72. \MR{2056093}

\bibitem[KN63]{KobayashiNomizu63}
Shoshichi Kobayashi and Katsumi Nomizu, \emph{Foundations of differential geometry. {I}}, Intersci. Tracts Pure Appl. Math., vol.~15, Interscience Publishers, New York, NY, 1963 (English).

\bibitem[Lü05]{Lueck05}
Wolfgang Lück, \emph{Survey on classifying spaces for families of subgroups}, Infinite groups: geometric, combinatorial and dynamical aspects, Progr. Math., vol. 248, Birkhäuser, Basel, 2005, pp.~269--322. \MR{2195456}

\bibitem[Mil56]{Milnor56}
John Milnor, \emph{Construction of universal bundles. {I}}, Ann. of Math. (2) \textbf{63} (1956), 272--284. \MR{77122}

\bibitem[MZ55]{MontgomeryZippin55}
Deane Montgomery and Leo Zippin, \emph{Topological transformation groups}, Interscience Publishers, New York-London, 1955. \MR{73104}

\bibitem[Roe03]{Roe03}
John Roe, \emph{Lectures on coarse geometry}, University Lecture Series, vol.~31, American Mathematical Society, Providence, RI, 2003. \MR{2007488}

\bibitem[Ros22]{Rosendal22}
Christian Rosendal, \emph{Coarse geometry of topological groups}, Cambridge Tracts in Mathematics, vol. 223, Cambridge University Press, Cambridge, 2022. \MR{4327092}

\bibitem[Spa66]{Spanier66}
Edwin~H. Spanier, \emph{Algebraic topology}, McGraw-Hill Book Co., New York-Toronto-London, 1966. \MR{210112}

\bibitem[Ste99]{Steenrod99}
Norman Steenrod, \emph{The topology of fibre bundles}, Princeton Landmarks in Mathematics, Princeton University Press, Princeton, NJ, 1999, Reprint of the 1957 edition, Princeton Paperbacks. \MR{1688579}

\bibitem[Tie94]{Tiemeyer94}
Andreas Tiemeyer, \emph{Kompaktheitseigenschaften lokalkompakter gruppen}, Ph.D. thesis, Universität Bielefeld, 1994.

\bibitem[Wit24]{WitzelOWR}
Stefan Witzel, \emph{Finiteness properties and locally compact groups}, Oberwolfach report: Homological aspects of TDLC-groups, 2024.

\end{thebibliography}

\end{document}